\newtheorem{theorem}{Theorem}
\newtheorem{proposition}[theorem]{Proposition}
\newtheorem{corollary}[theorem]{Corollary}
\newtheorem{lemma}{Lemma}
\theoremstyle{definition}
\newtheorem*{remark}{Remark}
\newtheorem{example}{Example}
\newtheorem{definition}{Definition}
\renewcommand{\epsilon}{\varepsilon}
\def\R{\mathbb{R}}
\def\N{\mathbb{N}}
\def\cH{\EuScript{H}}
\def\cM{\EuScript{M}}
\renewcommand{\epsilon}{\varepsilon}
\def\N{\mathbb{N}}
\def\R{\mathbb{R}}
\def\cA{\EuScript{A}}
\def\cB{\EuScript{B}}
\def\cF{\EuScript{F}}
\def\cG{\EuScript{G}}
\def\cM{\EuScript{M}}
\def\Id{\text{\rm Id}}
\begin{document}
	
\title{\textbf{A Liv\v{s}ic-type theorem and some regularity properties for nonadditive sequences of potentials}}
	
\author{Carllos Eduardo Holanda$^1$ \hspace{0.07cm} and \hspace{0.07cm} Eduardo Santana$^2$}
\date{%
$^1$\small{Department of Mathematics, Shantou University, Shantou, 515063, Guangdong, China\\
\textit{$^1$E-mail address:} \texttt{c.eduarddo@gmail.com} \\
$^2$Universidade Federal de Alagoas, Penedo, 57200-000, Alagoas, Brazil\\
\textit{$^2$E-mail address:} \texttt{jemsmath@gmail.com}
}}
	
\maketitle

\begin{abstract}
We study some notions of cohomology for asymptotically additive sequences and prove a Liv\v{s}ic-type result for almost additive sequences of potentials. As a consequence, we are able to characterize almost additive sequences based on their equilibrium measures and also show the existence of almost (and asymptotically) additive sequences of H\"older continuous functions satisfying the bounded variation condition (with a unique equilibrium measure) and which are not physically equivalent to any additive sequence generated by a H\"older continuous function. None of these examples were previously known, even in the case of full shifts of finite type. Moreover, we also use our main result to suggest a classification of almost additive sequences based on physical equivalence relations with respect to the classical additive setup.
\end{abstract}

\section{Introduction}

Let $X$ be a topological space and $T: X \to X$ a map. A sequence of functions $(f_n)_{n \ge 1}$ is \emph{asymptotically additive} with respect to $T$ if for each $\epsilon > 0$ there exists a function $f: X \to \R$ such that 
\[
\limsup_{n \to \infty}\frac{1}{n}\|f_n - S_nf\|_{\infty} < \epsilon,
\]
where $S_nf := \sum_{k=0}^{n-1}f \circ T^{k}$ is the \emph{additive sequence generated by the function $f$} and $\|\cdot\|_{\infty}$ is the usual supremum norm on the space of continuous functions. 

A sequence $\cF = (f_n)_{n \ge 1}$ is \emph{almost additive} with respect to $T$ if there exists $C > 0$ such that 
\[
-C + f_{m}(x) + f_{n}(T^m(x)) \le f_{m+n}(x) \le f_{m}(x) + f_{n}(T^m(x)) + C
\]
for every $x \in X$ and all $m,n \ge 1$. Feng and Huang showed in \cite{FH10} that almost additive sequences are in fact asymptotically additive. 

Inspired by some nomenclature in statistical mechanics and classical thermodynamic formalism (see for example \cite{Rue78}, \cite{VSF93} and \cite{Cun20}), we say that two nonadditive sequences of functions $\cF:= (f_n)_{n \ge 1}$ and $\cG:= (g_n)_{n \ge 1}$ (with respect to some given map) are \emph{physically equivalent}, or $\cF$ is \emph{physically equivalent} to $\cG$, if 
\[
\lim_{n \to \infty}\frac{1}{n}\|f_n - g_n\|_{\infty} = 0. 
\]

The following physical equivalence result was obtained in \cite{Cun20}: 

\begin{theorem}\label{NCT}
Let $\cF = (f_{n})_{n \in \N}$ be an asymptotically additive or almost additive sequence of continuous functions. Then, there exists a continuous function $f: X \to \mathbb{R}$ such that
\[
\lim_{n \to \infty} \frac{1}{n}\parallel f_{n} - S_{n} f \parallel_{\infty} = 0.
\]
\end{theorem}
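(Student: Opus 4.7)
The plan is to reduce to the asymptotically additive case---by the Feng--Huang result already cited, almost additivity implies asymptotic additivity---and then to produce the generator $f$ as a suitable limit of approximating generators.

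Using asymptotic additivity, for each $k \in \N$ I would choose a continuous $g_k \colon X \to \R$ with $\limsup_{n \to \infty} \tfrac{1}{n}\|f_n - S_n g_k\|_\infty < 2^{-k}$. Comparing two such generators via the triangle inequality gives
\[
\tfrac{1}{n}\|S_n(g_k - g_j)\|_\infty \le \tfrac{1}{n}\|f_n - S_n g_k\|_\infty + \tfrac{1}{n}\|f_n - S_n g_j\|_\infty,
\]
whose $\limsup$ in $n$ is at most $2^{-k} + 2^{-j}$. Combined with the standard variational identity
\[
\limsup_{n \to \infty} \tfrac{1}{n}\|S_n h\|_\infty = \sup_{\mu \in \cM(X,T)} \bigl|\textstyle\int h\, d\mu\bigr|, \qquad h \in C(X),
\]
(which follows from the fact that empirical averages along orbits accumulate on integrals against invariant measures), this shows $(g_k)$ is Cauchy with respect to the seminorm $p(h) := \sup_{\mu \in \cM(X,T)} |\int h\, d\mu|$.

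The main obstacle is to upgrade this $p$-Cauchy sequence to an actual continuous function $f \in C(X)$ with $p(g_k - f) \to 0$. Two possible routes occur to me: (i) observe that the affine continuous maps $\mu \mapsto \int g_k\, d\mu$ on the compact convex set $\cM(X,T)$ converge uniformly to a continuous affine $\Phi$, and then realize $\Phi$ as $\mu \mapsto \int f\, d\mu$ for some $f \in C(X)$ by a Hahn--Banach-type extension (nontrivial since the natural restriction map $C(X) \to A(\cM(X,T))$ need not have closed range); or (ii) adjust each $g_k$ by continuous coboundaries $u_k \circ T - u_k$---which leave every invariant integral unchanged---so as to force the modified sequence to be Cauchy in supremum norm, in which case $f$ is its uniform limit. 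This is the essential hurdle, and the argument in~\cite{Cun20} presumably realizes one of these approaches through a specific construction.

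Once $f$ has been produced, the conclusion is immediate from the triangle inequality:
\[
\limsup_{n \to \infty} \tfrac{1}{n}\|f_n - S_n f\|_\infty \le \limsup_{n \to \infty} \tfrac{1}{n}\|f_n - S_n g_k\|_\infty + p(g_k - f) \le 2^{-k} + p(g_k - f),
\]
and sending $k \to \infty$ yields $\limsup_n \tfrac{1}{n}\|f_n - S_n f\|_\infty = 0$, as desired.
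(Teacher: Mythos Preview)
The paper does not prove Theorem~\ref{NCT}; it is quoted as a result of Cuneo~\cite{Cun20} and used as a black box throughout. So there is no ``paper's own proof'' to compare against. That said, your proposal is not a proof but an outline with an explicitly acknowledged gap, and the gap is precisely where all the content lies.

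Your reduction to the asymptotically additive case, the choice of approximants $g_k$, and the observation that $(g_k)$ is Cauchy for the seminorm $p(h)=\sup_{\mu\in\cM_T}|\int h\,d\mu|=\lim_n\frac{1}{n}\|S_nh\|_\infty$ are all fine. The difficulty, as you say, is to produce $f\in C(X)$ with $p(g_k-f)\to 0$. Of your two suggested routes, route~(i) is genuinely problematic: the image of the map $C(X)\to A(\cM_T)$, $h\mapsto(\mu\mapsto\int h\,d\mu)$, need not be closed, so uniform convergence of the affine functionals does not by itself yield a continuous realizer. Route~(ii) is the correct one, but you do not supply the mechanism for choosing the coboundaries. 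The missing observation is elementary but decisive: for any $h\in C(X)$ and any $n\in\N$, the function $\tfrac{1}{n}S_nh-h$ is a coboundary (indeed $h\circ T^j-h=(S_jh)\circ T-S_jh$, and one sums). Hence, given $h$ with $p(h)<\epsilon$, one picks $n$ with $\tfrac{1}{n}\|S_nh\|_\infty<2\epsilon$ and replaces $h$ by $\tfrac{1}{n}S_nh$; this changes $h$ by a coboundary and makes its sup norm small. Applying this inductively with $h=\phi_{k+1}-\psi_k$ (where $\psi_k$ is the previously adjusted $\phi_k$) yields a telescoping sequence $(\psi_k)$ with $\sum_k\|\psi_{k+1}-\psi_k\|_\infty<\infty$, whose uniform limit is the desired $f$. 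This is, in essence, Cuneo's construction. Without this step your argument is incomplete: it identifies the right seminorm and the right strategy but stops short of the one concrete idea that makes route~(ii) go through.
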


A nonadditive sequence $\cF = (f_n)_{n \in \N}$ (with respect to $T: X \to X$) has \emph{bounded variation} if there exist $M > 0$ and $\epsilon > 0$ such that for $x, y \in X$ and $n \in \N$, we have that $d(T^{k}(x),T^{k}(y)) < \epsilon$ for every $k \in \{0,...,n-1\}$ implies $|f_n(x) - f_n(y)| \le M$ (see section~\ref{mr}). Observe that in Theorem \ref{NCT}, if the almost additive sequence $\cF$ has bounded variation, the additive sequence $(S_nf)_{n \in \N}$ does not necessarily have bounded variation in general.

A function $f: X \to \R$ is said to be \emph{Bowen} or to satisfy the \emph{Bowen property} if the additive sequence $(S_nf)_{n \in \N}$ has bounded variation (see section \ref{coh}).  

Let $X$ be a compact metric space and $T: X \to X$ be a continuous map. The following two questions were posted in \cite{Cun20}: 

\textbf{Question A.} \emph{Is there, given any almost additive sequence of continuous functions $\mathcal{F} = (f_n)_{n \in \N}$ with bounded variation, a continuous function $f: X \to \R$ such that $(S_nf)_{n \in \N}$ has bounded variation and}
\[
\lim _{n \to \infty}\frac{1}{n}\|f_{n} - S_{n}f\|_{\infty} = 0 \quad ?
\]

\textbf{Question B.} \emph{Is there, given any almost additive sequence $\mathcal{F} = (f_n)_{n \in \N}$ with bounded variation, a continuous function $f: X \to \R$ such that}
\[
\sup_{n \in \N} \| f_{n} - S_{n}f \|_{\infty} < \infty \quad ?
\]

Letting $\sigma: \Sigma^{\N} \to \Sigma^{\N}$ be the left-sided full shift of finite type, other natural finer questions about nonadditive regularity are the following (see also \cite{LLV22}):



\textbf{Question C.} \emph{Is there, given any almost additive sequence $\mathcal{F} = (f_n)_{n \in \N}$ of H\"older continuous functions with bounded variation (with respect to $\sigma$), a H\"older continuous function $f: X \to \R$ such that}
\[
\lim _{n \to \infty}\frac{1}{n}\|f_{n} - S_{n}f\|_{\infty} = 0 \quad ?
\]

\textbf{Question D.} \emph{Is there, given any asymptotically additive sequence $\mathcal{F} = (f_n)_{n \in \N}$ of (H\"older) continuous functions  with bounded variation (with respect to $\sigma$) and admitting a unique equilibrium measure, a Bowen (or H\"older) continuous function $f: X \to \R$ such that}
\[
\lim _{n \to \infty}\frac{1}{n}\|f_{n} - S_{n}f\|_{\infty} = 0 \quad ?
\]

It is well known that bounded variation does not guarantee uniqueness of equilibrium measures for asymptotically additive sequences, even in the case of full shifts of finite type. Based on that, one can easily find asymptotically additive sequences of locally constant functions which are \emph{not} physically equivalent to any additive sequence generated by a H\"older (or Bowen) continuous function (see Remark 4.5 in \cite{Cun20}). On the other hand, the same equivalence problem might be different for asymptotically additive sequences with unique equilibrium measures.

Based on the equivalence possibility brought by Theorem \ref{NCT}, all these questions arise naturally considering the different levels of additive and nonadditive regularity associated. We note that for subshifts of finite type, expanding and hyperbolic maps in general, if $f: X \to \R$ is a H\"older continuous function then the additive sequence $(S_nf)_{n \in \N}$ satisfies the bounded variation property (see section \ref{mr}).

Observe that \textbf{Question A} does not necessarily imply \textbf{Question B} in general. Moreover, if $\cF$ is uniformly bounded, that is, $\sup_{n \in \N} \| f_{n}\|_{\infty} < \infty$, the sequence has bounded variation and the questions \textbf{A} and \textbf{B} are always affirmatively answered taking any continuous function $f$ cohomologous to zero.

Let us now see some consequences of answering questions \textbf{A}, \textbf{B}, \textbf{C} and \textbf{D}. 

\begin{enumerate}
\item \emph{Uniqueness of equilibrium measures}: a positive answer to \textbf{Question A} implies that one can obtain the uniqueness of equilibrium measures for almost additive sequences with bounded variation directly from the classical uniqueness result for a single potential (see \cite{Bow75}, \cite{Bar06} and \cite{Mum06}).

\item \emph{Quasi-Bernoulli and Gibbs measures:} a positive answer to \textbf{Question B} readily implies that every quasi-Bernoulli measure is in fact a Gibbs measure with respect to some continuous potential (see subsection \ref{BHR} and also \cite{BM}, \cite{Cun20}). 


\item \emph{Nonadditive ergodic optimization}: giving a positive answer to \textbf{Question C} and \textbf{Question~D} would allow us to use ergodic optimization results for H\"older potentials to either simplify or automatically extend some results for almost and asymptotically additive sequences of potentials (see for example \cite{CLT01}, \cite{Jen06}, \cite{Mor08}, \cite{CH10}, \cite{Con16}, \cite{Boc18}, \cite{Jen19}, \cite{Zha19} and \cite{BHVZ21}). We believe that this would give a better understanding of more general nonadditive settings, such as the asymptotically subadditive setup (see for example \cite{GG16}). 

\item \emph{Regularity of the nonadditive topological pressure}: an affirmative answer to \textbf{Question~C} and \textbf{Question D} automatically gives important classes of asymptotically and almost additive sequences $\cF := (f_n)_{n \in \N}$ where the nonadditive topological pressure $t \mapsto P(t\cF)$ is actually analytic, instead of having $C^{1}$ regularity (or less) in general (see \cite{Rue78} and \cite{BD09}). 

\item \emph{Nonadditive multifractal analysis:} answering positively \textbf{Question C} and \textbf{Question~D} also immediately guarantees higher-regularity of the entropy and dimension spectra for important classes of asymptotically and almost additive sequences with respect to some dynamical systems with hyperbolic behavior (see \cite{BS01}, \cite{BSS02}, \cite{BD09} and \cite{BCW13}).    

\end{enumerate}

All these regularity issues indicate that, even with the information provided by Theorem~\ref{NCT}, the relationship between additive, asymptotically and almost additive sequences are not yet completely understood. Notice that if all these questions are positively answered, the almost (asymptotically) additive world and the additive world are in fact the "same" in a more complete sense, regarding many aspects of thermodynamic formalism, ergodic optimization and multifractal analysis for discrete-time dynamical systems. 

We note that a positive answer to \textbf{Question B} immediately implies a positive answer to \textbf{Question A}. Moreover, for some subshifts of finite type, and some types of expanding and hyperbolic setups, a positive answer to \textbf{Question C} (without the H\"older continuity hypotheses on the sequences) also implies a positive answer to \textbf{Question A}. 

In this work, we obtained the following characterization result for almost additive sequences of continuous functions:

\textbf{Main result} (Theorem \ref{BOU}). \emph{Let $T\colon X \to X$ be a continuous map on a compact metric space $X$, satisfying the Closing Lemma and having a point with dense orbit. Let $\cG = (g_n)_{n \in \N}$ be an almost additive sequence of continuous functions (with respect to $T$) with bounded variation. Then, the following are equivalent:}
\begin{enumerate}
\item $\lim_{n \to \infty}\|g_{n}\|_{\infty}/n = 0$;
	
\item $\sup_{n \in \N}\|g_n\|_{\infty} < \infty$;
	
\item there exists $K > 0$ such that  $|g_n(p)| \le K$ for all $p \in X$ and $n \in \N$ with $T^{n}(p) = p$. 
\end{enumerate}

This result is proved without using the physical equivalence given by Theorem \ref{NCT}, and it immediately shows that \textbf{Question A} is actually equivalent to \textbf{Question~B} for any topologically transitive map satisfying the Closing Lemma (see Corollary \ref{EQ}). This characterization result is an improvement concerning the understanding of regularity problems for nonadditive sequences, and it holds for setups including one-sided and two-sided full shifts of finite type, topologically transitive subshifts of finite type, repellers of topologically trasitive $C^{1}$ maps and locally maximal hyperbolic sets for topologically transitive $C^{1}$ diffeomorphisms. We also applied our main result to characterize almost additive sequences based on their equilibrium states and some cohomology relations, indicating that Theorem \ref{BOU} works like a nonadditive version of the classical Liv\v{s}ic theorem (\cite{Liv72}). Moreover, we show that Theorem \ref{BOU} is optimal in the sense that it does not hold for asymptotically and subadditive sequences in general. 

By using our main characterization result together with an example given by B\'ar\'any, K\"aenm\"aki and Morris in \cite{BKM} for the case of planar matrix cocycles, we are able to show the existence of almost additive sequences of H\"older continuous functions with bounded variation which are \emph{not} physically equivalent to any additive sequence generated by a H\"older continuous function. This result is new and gives a \emph{negative} answer to \textbf{Question~C}. Furthermore, we also show a construction giving a negative answer to \textbf{Question D} for the case of H\"older and Bowen regularity in general.  

Since a considerable part of the classical thermodynamic formalism, additive multifractal analysis and ergodic optimization for symbolic dynamics and some hyperbolic and uniformly expanding maps is mostly centered around H\"older continuous potentials, which is a natural class of functions to be considered in these setups (\cite{Rue78}, \cite{KH12}, \cite{VO16}, \cite{BS00}, \cite{BS01}. \cite{BSS02}, \cite{Boc18}, \cite{Jen19}), our negative answers to \textbf{Question C} and \textbf{Question D} go in the direction of revealing that almost (and asymptotically) additive sequences of potentials do not always possess the same expected regularity properties as the additive ones in general. Nevertheless, as far as we know, the more general Bowen regularity problem in \textbf{Question A} still remains open even in the case of full shifts of finite type.

The paper is organized as follows. Based on classical results and the physical equivalence theorem obtained in \cite{Cun20}, we study and compare some different aspects of cohomological notions for almost and asymptotically additive sequences. In the third section, we obtain our main result (Theorem \ref{BOU}) and we show how it is related with some results in the context of general matrix cocycles. As an application of our main theorem, we conclude the third section showing how to classify almost additive sequences based on cohomology and equilibrium measures. In the final section, we start with some examples showing setups where we can always answer affirmatively the \textbf{Questions} \textbf{A} and $\textbf{B}$, and setups where \textbf{Question A} is always positively answered but \textbf{Question B} cannot be affirmatively satisfied. In the next subsection, building on an example given in \cite{BKM}, we use again our main result to exhibit examples of almost additive sequences of H\"older continuous potentials satisfying the bounded variation property but  not physically equivalent to any additive sequence generated by any H\"older continuous potential, finally giving a negative answer to \textbf{Question C}. In the following subsection, inspired by some recent developments in the context of matrix cocycles, we use Theorem \ref{BOU} to suggest a classification of almost additive sequences based on the different types of physical equivalence relations with the additive setup. We conclude our work showing some constructions of almost and asymptotically additive sequences of different types, giving a negative answer to \textbf{Question D}, and also exploring some more general open problems concerning the Bowen and Walters regularity.

\section{Some notions of cohomology for asymptotically  additive sequences of functions}\label{coh}

Let $T: X \to X$ be a continuous map on a compact metric space $X$ and $(f_{n})_{n \in \N}$ be an \textit{almost additive} sequence of continuous functions (with respect to $T$), that is, $f_{n} : X \to \R$ is continuous for all $n \in \N$ and there exists $C>0$ such that
\[
- C + f_{n}(x) + f_{m}(T^{n}(x)) \leq f_{m + n}(x) \leq C+ f_{n}(x) + f_{m}(T^{n}(x)), 
\]
for all $x \in X$ and $m,n \in \N$. 

We say that a function $\varphi: X \to \R$ or the additive sequence $(S_n\varphi)_{n \in \N}$ satisfies the \emph{Walters property} or is a \emph{Walters function} if for each $\kappa > 0$ there exists $\epsilon > 0$ such that for $x, y \in X$ and $n \in \N$, we have that $d(T^{k}(x),T^{k}(y)) < \epsilon$ for every $k \in \{0,...,n-1\}$ implies $|S_n\phi(x) - S_n\phi(y)| < \kappa$. Moreover, we say that a function $\psi: X \to \R$ satisfies the \emph{Bowen property} or is a \emph{Bowen function} if there exist $M > 0$ and $\epsilon > 0$ such that for $x, y \in X$ and $n \in \N$, we have that $d(T^{k}(x),T^{k}(y)) < \epsilon$ for every $k \in \{0,...,n-1\}$ implies $|S_n\phi(x) - S_n\phi(y)| \le M$. It is clear from the definitions that every function satisfying the Walters property also satisfies the Bowen property. 

Let $T: X \to X$ be a continuous map on a compact metric space $X$. The following result is the \emph{Closing Lemma}, a classical well known tool in hyperbolic dynamics (see for example \cite{KH12}).   

\begin{lemma}\label{CCL} For every $\epsilon > 0$ there exists $\delta > 0$ such that if $x \in X$ and $n \in \N$ satisfying $d(T^n(x),x)<\delta$, then there exists $y \in X$ such that $T^n(y) = y$ and $d(T^k(x),T^k(y)) < \epsilon$ for all $0 \le k < n$.
\end{lemma}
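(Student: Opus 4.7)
My plan is to sketch the classical proof of the Anosov Closing Lemma in the hyperbolic or uniformly expanding setting, which is the context in which the result is customarily cited (and where the statement as written actually holds; for an arbitrary continuous map on a compact metric space no such $\delta$ exists in general, so the reference \cite{KH12} supplies the missing regularity hypotheses on $T$).

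First I would fix a small $\epsilon > 0$ so that a local product structure is available on $X$: there exist $\delta_{0} > 0$, $C \ge 1$ and $\lambda \in (0,1)$ such that whenever $d(a,b) < \delta_{0}$, the set $[a,b] := W^{u}_{\epsilon}(a) \cap W^{s}_{\epsilon}(b)$ is a single point depending continuously on $(a,b)$, and the hyperbolic contraction estimates
\[
d(T^{k}z, T^{k}a) \le C \lambda^{k} d(z,a) \quad (z \in W^{s}_{\epsilon}(a)), \qquad d(T^{-k}z, T^{-k}a) \le C \lambda^{k} d(z,a) \quad (z \in W^{u}_{\epsilon}(a))
\]
hold uniformly. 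Given $x$ with $d(T^{n}(x), x) < \delta$, the plan is then to produce the periodic point $y$ as the fixed point of the bracket map $\Phi(w) := [T^{n}(w), x]$ defined on a small ball around $x$.

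Since $T^{n}$ contracts along stable leaves by at most $C\lambda^{n}$ and, dually, $T^{-n}$ contracts along unstable leaves by at most $C\lambda^{n}$, the map $\Phi$ is a genuine contraction for $\delta$ small enough and $n$ large (small $n$ is handled by direct continuity). Banach's fixed point theorem yields a unique $y = \Phi(y)$, which, unfolding the bracket, satisfies $T^{n}(y) = y$ and lies in $W^{s}_{\epsilon}(x) \cap W^{u}_{\epsilon}(T^{n}(x))$. The shadowing property $d(T^{k}(x), T^{k}(y)) < \epsilon$ for $0 \le k < n$ is obtained by splitting the index range: for small $k$ use $y \in W^{s}_{\epsilon}(x)$ and the forward contraction estimate; for $k$ close to $n$ use $T^{n}(y) = y \in W^{u}_{\epsilon}(T^{n}(x))$ and the backward contraction estimate applied to $T^{-(n-k)}$.

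The main obstacle is ensuring uniformity of the constants in $n$ and $x$, which in the hyperbolic or expanding setting is guaranteed by the uniform hyperbolic constants on $X$ and a compactness argument. For purely expanding maps the argument simplifies: an inverse branch of $T^{n}$ restricted to a small neighborhood of $x$ is a $C\lambda^{n}$-contraction, and its unique fixed point is the desired $y$, with the shadowing estimate following directly from the uniform expansion. Since the result is classical and stated essentially verbatim in \cite{KH12}, I would invoke the proof there for the full technical details rather than reproducing them.
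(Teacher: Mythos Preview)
The paper does not actually prove this lemma; it simply states it as ``a classical well known tool in hyperbolic dynamics'' and cites \cite{KH12} for the proof. Your sketch of the Anosov Closing Lemma via local product structure and a contraction-mapping argument is precisely the standard proof found in that reference, and your observation that the statement requires hyperbolicity or expansion hypotheses (implicitly supplied by \cite{KH12}) rather than holding for arbitrary continuous maps is correct and worth noting.
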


A continuous function $f: X \to \R$ is said to be \emph{cohomologous to zero}, or a \emph{coboundary} (with respect to some continuous map $T$) when there exists a continuous function $q: X \to \R$ such that $f(x) = q(T(x)) - q(x)$ for all $x \in X$. We say that a function $f$ is cohomologous to another function $g$ if $f-g$ (or $g-f$) is a coboundary.

The following proposition is a more general version of the classical \emph{Liv\v{s}ic Theorem} originally obtained in \cite{Liv72}. 

\begin{proposition}\label{LIV}
Let $T\colon X \to X$ be a continuous map satisfying the Closing Lemma and having a point with dense orbit. Let $f\colon X \to \R$ be a continuous function satisfying the Walters property. Then $f$ is cohomologous to zero if and only if for every periodic point $x = T^n(x)$ we have $S_nf(x) = 0$.
\end{proposition}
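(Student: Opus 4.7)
The only-if direction is immediate: if $f = q \circ T - q$ with $q$ continuous, then telescoping gives $S_n f(x) = q(T^n x) - q(x)$, which vanishes whenever $T^n x = x$. The content is the converse, for which I follow the classical Liv\v{s}ic strategy: first define $q$ along the forward orbit of a transitive point so that the cohomological equation holds by construction, then show $q$ is uniformly continuous there, and finally extend it to $X$ by density.

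Fix $x_0 \in X$ with dense forward orbit. We may assume $X$ is infinite (otherwise $X$ is a single periodic orbit and the conclusion is tautological), so $n \mapsto T^n x_0$ is injective. Define $q(x_0) := 0$ and $q(T^n x_0) := S_n f(x_0)$ for $n \ge 1$. Then by construction $q(T^{n+1} x_0) - q(T^n x_0) = f(T^n x_0)$ along the orbit.

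The main step is uniform continuity of $q$ on $\{T^n x_0 : n \ge 0\}$. Fix $\kappa > 0$. By the Walters property of $f$, choose $\epsilon > 0$ such that whenever $d(T^k u, T^k v) < \epsilon$ for all $0 \le k < N$ one has $|S_N f(u) - S_N f(v)| < \kappa$. For this $\epsilon$, Lemma~\ref{CCL} yields $\delta > 0$ with the corresponding closing property. Suppose $m < n$ with $d(T^m x_0, T^n x_0) < \delta$, and set $z := T^m x_0$, so that $d(T^{n-m} z, z) < \delta$. Then there exists a periodic point $y$ with $T^{n-m} y = y$ and $d(T^k z, T^k y) < \epsilon$ for $0 \le k < n-m$. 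By hypothesis $S_{n-m} f(y) = 0$, and the Walters property gives $|S_{n-m} f(z) - S_{n-m} f(y)| < \kappa$, hence
\[
|q(T^n x_0) - q(T^m x_0)| = |S_n f(x_0) - S_m f(x_0)| = |S_{n-m} f(z)| < \kappa.
\]

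Being uniformly continuous on a dense subset of the compact metric space $X$, the map $q$ admits a unique continuous extension $q \colon X \to \R$, and the identity $f = q \circ T - q$ extends from the dense orbit to all of $X$ by continuity of both sides. The principal obstacle in this plan is the uniform continuity step: it requires a delicate coupling of the Walters modulus (to convert closeness of orbit segments into smallness of the Birkhoff-sum difference) with the Closing Lemma (to produce a genuine periodic point shadowing a near-return, at which the hypothesis forces the Birkhoff sum to vanish). Everything else is bookkeeping.
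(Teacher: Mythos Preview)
Your proof is correct and follows precisely the classical Liv\v{s}ic argument: define the transfer function along a dense orbit, use the Closing Lemma together with the vanishing of Birkhoff sums at periodic points and the Walters modulus to get uniform continuity, then extend by density. The paper itself does not supply a proof of Proposition~\ref{LIV}; it is stated there as a known generalization of Liv\v{s}ic's theorem and then used as input for Corollary~\ref{COH} and Proposition~\ref{EQC}. So there is no paper proof to compare against---your argument simply fills in the standard proof that the paper omits.
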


Observe that when a potential $f$ is cohomologous to zero, by definition, there exists a continuous function $h\colon X \to \R$ such that $f = h\circ T - h$, and this readily implies that
\begin{equation}\label{ttt}
\int_{X}f d\mu = 0\quad\text{for all $T$-invariant measure $\mu$.} 
\end{equation}
Conversely, assume that condition~\eqref{ttt} holds. In particular, whenever $T^n(x) = x$ we also have $\int_{X}f d\nu = 0$ for the measure $\nu = \frac1n\sum_{k=0}^{n-1}\delta_{T^k(x)}$, and consequently
\[
0 = \int_{X}f d\nu = f(x) + f(T(x)) +\cdots + f(T^{n-1}(x)) = S_nf(x).
\]
Therefore, by the Liv\v{s}ic Theorem (Proposition \ref{LIV}) we conclude that $f$ is cohomologous to zero. We just obtained the following result:

\begin{corollary}\label{COH}
$f$ is cohomologous to zero if and only if $\int_{X}f d\mu = 0$ for every $T$-invariant measure $\mu$.
\end{corollary}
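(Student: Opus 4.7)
The plan is essentially to harvest the two-way argument that the authors already sketch in the paragraphs preceding the statement, packaging it cleanly as a proof of the equivalence while invoking Proposition \ref{LIV} as a black box for the nontrivial direction. Throughout I keep the standing hypotheses (continuous $T$ on a compact metric space, Closing Lemma, dense orbit, and $f$ a Walters function) from the statement of the Liv\v{s}ic-type Proposition \ref{LIV}.

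For the \emph{forward} direction I would start from the definition: if $f$ is cohomologous to zero, then there is a continuous $h \colon X \to \R$ with $f = h \circ T - h$. For any $T$-invariant Borel probability $\mu$, integrating gives
\[
\int_X f\,d\mu = \int_X h\circ T\,d\mu - \int_X h\,d\mu = 0,
\]
by the definition of $T$-invariance of $\mu$ applied to the continuous function $h$. This uses no dynamics beyond invariance and does not need the Walters/Bowen hypothesis.

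For the \emph{converse}, suppose $\int_X f\,d\mu = 0$ for every $T$-invariant measure $\mu$. The key observation is that for each periodic point $x$ with $T^n(x)=x$, the orbit average
\[
\nu_x \;=\; \frac{1}{n}\sum_{k=0}^{n-1}\delta_{T^k(x)}
\]
is a $T$-invariant probability measure. The hypothesis then forces
\[
0 \;=\; \int_X f\,d\nu_x \;=\; \frac{1}{n}\bigl(f(x)+f(T(x))+\cdots+f(T^{n-1}(x))\bigr) \;=\; \frac{1}{n}S_n f(x),
\]
so that $S_n f(x) = 0$ at every periodic point. Since $f$ is a Walters function, $T$ satisfies the Closing Lemma, and $T$ has a point with dense orbit, Proposition \ref{LIV} applies and delivers a continuous transfer function $h\colon X\to\R$ with $f = h\circ T - h$, i.e.\ $f$ is cohomologous to zero.

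There is no real obstacle here beyond recognizing that periodic orbit measures are invariant; the corollary is simply the dual reformulation of the Liv\v{s}ic criterion through the Krylov--Bogolyubov-style correspondence between vanishing of Birkhoff sums on periodic orbits and vanishing of the integral against all invariant measures. The only point of care is to make sure the class of test measures is broad enough: we only \emph{use} the hypothesis for the periodic orbit measures, but the \emph{conclusion} then applies to every invariant measure, giving the `if and only if'. The Walters regularity assumption on $f$ is silently inherited from the framework of Proposition \ref{LIV} and is essential only in the converse direction.
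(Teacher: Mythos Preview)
Your proof is correct and follows essentially the same argument as the paper, which in fact presents this two-way argument in the paragraphs immediately preceding the corollary statement. The only cosmetic difference is that you correctly retain the factor $\frac{1}{n}$ in the computation $\int_X f\,d\nu_x = \frac{1}{n}S_n f(x)$, whereas the paper's display omits it (harmlessly, since the conclusion $S_n f(x)=0$ is the same).
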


Let $\cM_T$ be the set of all $T$-invariant measures.

\begin{proposition}\label{EQC}
Under the conditions of Proposition~\ref{LIV}, $f$ is cohomologous to zero if and only if 
\[
\lim_{n \to \infty}\frac1n\|S_nf\|_{\infty}=0.
\]
In particular, 
\[
\lim_{n \to \infty}\frac1n\|S_nf\|_{\infty}=0 \textrm{ if and only if } \sup_{n \in \N}\|S_nf\|_{\infty} < \infty.
\]
\end{proposition}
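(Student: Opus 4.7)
The plan is to reduce the equivalence to the Liv\v{s}ic-type result stated in Proposition~\ref{LIV}, using a simple periodic-point averaging argument.

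First I would dispense with the easy direction. If $f$ is cohomologous to zero, there is a continuous $h\colon X\to\R$ with $f=h\circ T-h$. Telescoping gives $S_nf=h\circ T^n-h$, so
\[
\|S_nf\|_\infty \le 2\|h\|_\infty <\infty,
\]
since $X$ is compact and $h$ continuous. In particular $\sup_n\|S_nf\|_\infty<\infty$, which trivially implies $\frac1n\|S_nf\|_\infty\to 0$.

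For the converse, suppose $\frac1n\|S_nf\|_\infty\to 0$. By Proposition~\ref{LIV}, it suffices to show $S_nf(p)=0$ for every periodic point $p=T^n(p)$, since $f$ is Walters by hypothesis. Given such a periodic point, the orbit has period $n$, so for every $k\ge 1$ one has $S_{kn}f(p)=k\,S_nf(p)$. Hence
\[
|S_nf(p)| \;=\; \frac{1}{k}\bigl|S_{kn}f(p)\bigr| \;\le\; \frac{1}{k}\|S_{kn}f\|_\infty \;=\; n\cdot\frac{1}{kn}\|S_{kn}f\|_\infty,
\]
and letting $k\to\infty$ forces $S_nf(p)=0$. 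Proposition~\ref{LIV} then yields that $f$ is cohomologous to zero.

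The final ``in particular'' statement is a direct consequence: $\sup_n\|S_nf\|_\infty<\infty$ trivially implies $\frac1n\|S_nf\|_\infty\to 0$, while the converse follows by chaining the two directions just proved — the limit condition gives that $f$ is a coboundary, and the first paragraph shows any coboundary has uniformly bounded Birkhoff sums. The argument is essentially mechanical once Proposition~\ref{LIV} is available; the only point requiring care is that the Walters hypothesis on $f$ is what licenses the use of the Liv\v{s}ic-type result, so this should be flagged explicitly. I do not anticipate any real obstacle beyond verifying the periodic-orbit identity $S_{kn}f(p)=kS_nf(p)$ and noting that the bound $\frac{1}{k}\|S_{kn}f\|_\infty$ tends to zero because $k\to\infty$ along the full subsequence $kn$.
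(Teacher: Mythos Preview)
Your proof is correct. The easy direction (coboundary $\Rightarrow$ bounded Birkhoff sums $\Rightarrow$ vanishing limit) is identical to the paper's. For the converse, however, the paper takes a slightly different route: it uses Birkhoff's ergodic theorem and dominated convergence to deduce $\int_X f\,d\mu=0$ for every $T$-invariant $\mu$, and then invokes Corollary~\ref{COH} (which in turn reduces to Proposition~\ref{LIV} via periodic-orbit measures). Your argument is more direct: you bypass the measure-theoretic step entirely and verify the periodic-point condition $S_nf(p)=0$ by the elementary averaging identity $S_{kn}f(p)=k\,S_nf(p)$, then apply Proposition~\ref{LIV} straight away. Both approaches ultimately rest on the Liv\v{s}ic result, but yours avoids the detour through invariant measures and the ergodic theorem, which makes it a bit cleaner for this statement; the paper's route, on the other hand, highlights the equivalent integral characterization (Corollary~\ref{COH}) that is of independent interest elsewhere in the paper.
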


\begin{proof}
Suppose we have $\lim_{n \to \infty}\frac1n\|S_nf\|_{\infty} = 0$. By the Lebesgue's dominated convergence theorem and the Birkhoff's ergodic theorem, we obtain
\[
0 = \int_{X}\lim_{n \to \infty}\frac1nS_nf d\mu = \int_{X}f d\mu
\]
for every $\mu \in \cM_T$. Hence, applying Corollary~\ref{COH} we conclude that $f$ is cohomologous to zero. Conversely, if $f$ is cohomologous to zero, there exists a continuous function $h$ such that $f(x) = h(T(x)) - h(x)$, which implies that $S_nf(x) = h(T^n(x)) - h(x)$ for every $n\in\N$. Consequently, $\|S_nf\|_{\infty} \le 2\|h\|_{\infty} < \infty$ for every $n\in\N$, and thus, $\lim_{n \to \infty}\frac1n\|S_nf\|_{\infty} =~0$.
\end{proof}

In this setting, Proposition~\ref{EQC} indicates a property for additive sequences that is equivalent to the notion of cohomology for functions. Actually, a notion of cohomology for asymptotically and almost additive sequences of continuous functions was introduced by Bomfim and Varandas in \cite{BV}. 

Let $X$ be a compact metric space and $T: X \to X$ be a continuous map. By the density of H\"older continuous functions on the space of continuous functions, one can show that for each asymptotically additive sequence of continuous functions $\cF = (f_n)_{n \in \N}$, there exists a family of H\"older continuous functions $(f_{\epsilon})_{\epsilon \in (0,1)}$ such that 
\[
\lim_{n \to \infty}\frac1n\|f_n - S_nf_{\epsilon}\|_{\infty} < \epsilon \quad \textrm{for any $\epsilon > 0$}
\]
(see Proposition 2.3 in \cite{BV}). The family $(f_{\epsilon})_{\epsilon \in (0,1)}$ is called an \emph{admissible family} for the sequence $\cF$.

\begin{definition}[{\cite[Definition~2.4]{BV}}]\label{bv}
Let $\cF$ be an asymptotically or almost additive sequence of functions with respect to a map $T: X \to X$. The sequence $\cF$ is said to be cohomologous to a constant if there exists an admissible family $(f_\epsilon)_{\epsilon > 0}$ for $\cF$ such that $f_{\epsilon}$ is
cohomologous to a constant for every small $\epsilon \in (0, 1)$, that is, there exists a constant $c_{\epsilon} \in \R$ and a
continuous function $u_{\epsilon}:X \to \R$ so that $f_{\epsilon} = u_{\epsilon} \circ T - u_{\epsilon} + c_{\epsilon}$. 
\end{definition}

Using this definition, the following result was obtained.

\begin{lemma}[{\cite[Lemma~2.5]{BV}}]\label{BVP}
An asymptotically additive sequence $\cF$ is cohomologous to a constant if and only if $(f_n/n)_{n \in \N}$ converges uniformly to a constant. In particular, $\cF$ is cohomologous to zero if and only if $\lim_{n \to \infty}\|f_n\|_{\infty}/n=0$.
\end{lemma}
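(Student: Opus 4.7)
The plan is to prove both implications by unpacking Definition~\ref{bv} of an admissible family and using the elementary telescoping identity $S_n f_\epsilon = u_\epsilon \circ T^n - u_\epsilon + n c_\epsilon$ whenever $f_\epsilon = u_\epsilon \circ T - u_\epsilon + c_\epsilon$.

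For the forward direction, assume $\cF$ is cohomologous to a constant. Applying Definition~\ref{bv} yields an admissible family $(f_\epsilon)_{\epsilon \in (0,1)}$ together with continuous functions $u_\epsilon$ and constants $c_\epsilon$ so that $f_\epsilon = u_\epsilon \circ T - u_\epsilon + c_\epsilon$. Combining the telescoping formula with the admissibility bound, and using that $u_\epsilon$ is bounded because $X$ is compact, I would obtain
\[
\limsup_{n \to \infty}\left\|\frac{f_n}{n} - c_\epsilon\right\|_\infty \le \limsup_{n \to \infty}\frac{\|f_n - S_n f_\epsilon\|_\infty}{n} + \limsup_{n \to \infty}\frac{2\|u_\epsilon\|_\infty}{n} < \epsilon.
\]
Comparing this bound for two parameters $\epsilon_1$ and $\epsilon_2$ forces $|c_{\epsilon_1} - c_{\epsilon_2}| \le \epsilon_1 + \epsilon_2$, so the net $(c_\epsilon)$ is Cauchy and converges to some $c \in \R$ as $\epsilon \to 0$. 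A final triangle inequality then yields $\|f_n/n - c\|_\infty \to 0$.

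For the converse, suppose $f_n/n \to c$ uniformly and set $f_\epsilon \equiv c$ for every $\epsilon \in (0,1)$; this is (trivially) H\"older continuous. Then $S_n f_\epsilon = nc$ and $\|f_n - S_n f_\epsilon\|_\infty/n = \|f_n/n - c\|_\infty \to 0 < \epsilon$, so $(f_\epsilon)$ is an admissible family for $\cF$. Writing $c = 0 \circ T - 0 + c$ displays each $f_\epsilon$ as cohomologous to the constant $c$, so by Definition~\ref{bv} $\cF$ is cohomologous to a constant. The ``in particular'' clause is then immediate by specializing $c = 0$.

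The only genuine obstacle is establishing convergence (not just boundedness) of the constants $c_\epsilon$ in the forward direction; this is exactly what the Cauchy comparison above handles, exploiting the freedom to shrink $\epsilon$. Everything else is routine bookkeeping around the admissibility inequality and the telescoping identity.
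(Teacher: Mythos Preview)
Your proof is correct. Note, however, that the paper does not supply its own proof of this lemma: it is quoted from \cite[Lemma~2.5]{BV} without argument. What the paper \emph{does} prove is the closely related Lemma~\ref{CEQ}, which is the same statement but for Definition~\ref{CHM} rather than Definition~\ref{bv}. That proof follows exactly your template---telescoping $S_n f = h\circ T^n - h + cn$ in the forward direction, and taking the constant function $f\equiv c$ in the converse---but is strictly simpler, since Definition~\ref{CHM} posits a single function $f$ (hence a single constant $c$) rather than a one-parameter family $(f_\epsilon)$. Your Cauchy comparison on the constants $c_\epsilon$ is precisely the extra ingredient needed to upgrade the Lemma~\ref{CEQ} argument to a full proof of Lemma~\ref{BVP} under Definition~\ref{bv}, and you handle it correctly.
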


Now, based on Theorem~\ref{NCT}, we can give a simpler definition of cohomology for asymptotically additive sequences.

\begin{definition}\label{CHM}	We say that an almost additive or asymptotically additive sequence of functions $\cF = (f_n)_{n \in \N}$ is cohomologous to a constant if there exists a continuous function $f$ cohomologous to a constant and such that
\[
\lim_{n \to \infty}\frac1n\|f_n - S_nf\|_{\infty}=0.
\]
\end{definition}
A reasonable thing to ask is if the two notions of cohomology are equivalent. In order to answer this, we have the following.

\begin{lemma}\label{CEQ}
Using Definition~\ref{CHM}, $\cF$ is cohomologous to a constant if and only if $\big(\frac{f_n}{n}\big)_{n \in \N}$ converges uniformly to a constant.
\end{lemma}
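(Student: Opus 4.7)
The plan is to prove both directions by direct calculation, using the compactness of $X$ to bound coboundary terms.

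For the forward implication, I would assume $\cF$ is cohomologous to a constant in the sense of Definition~\ref{CHM}. Then there exist a constant $c \in \R$, a continuous function $u \colon X \to \R$, and a continuous function $f = u \circ T - u + c$ satisfying
\[
\lim_{n\to\infty}\frac{1}{n}\|f_n - S_n f\|_{\infty} = 0.
\]
The Birkhoff sum telescopes as $S_n f(x) = u(T^n x) - u(x) + nc$, so by compactness of $X$ and continuity of $u$,
\[
\left\|\frac{S_n f}{n} - c\right\|_{\infty} \le \frac{2\|u\|_{\infty}}{n} \xrightarrow[n\to\infty]{} 0.
\]
A triangle inequality
\[
\left\|\frac{f_n}{n} - c\right\|_{\infty} \le \frac{1}{n}\|f_n - S_n f\|_{\infty} + \left\|\frac{S_n f}{n} - c\right\|_{\infty}
\]
then yields uniform convergence of $(f_n/n)_{n \in \N}$ to the constant $c$.

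For the reverse implication, suppose $(f_n/n)_{n \in \N}$ converges uniformly to a constant $c$. I would simply choose the continuous function $f \equiv c$, which is trivially cohomologous to a constant (take $u \equiv 0$ in the cohomology relation). Then $S_n f \equiv nc$, and
\[
\frac{1}{n}\|f_n - S_n f\|_{\infty} = \left\|\frac{f_n}{n} - c\right\|_{\infty} \xrightarrow[n\to\infty]{} 0
\]
by hypothesis. Thus $\cF$ is cohomologous to a constant according to Definition~\ref{CHM}.

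There is no real obstacle here: the proof amounts to telescoping the Birkhoff sum of a coboundary and applying the triangle inequality. The content of the lemma is essentially conceptual, confirming that the simplified Definition~\ref{CHM} (made possible by Theorem~\ref{NCT}) gives the same notion of cohomology as the admissible-family definition of \cite{BV}, and matches the characterization in Lemma~\ref{BVP}.
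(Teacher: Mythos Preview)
Your proof is correct and follows essentially the same approach as the paper: telescoping $S_n f$ to isolate the coboundary term, bounding it via compactness, and applying the triangle inequality for the forward direction; and taking $f\equiv c$ for the converse. The paper's argument is identical up to notation.
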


\begin{proof}
Suppose $\cF$ is physically equivalent to the additive sequence $(S_nf)_{n \in \N}$, where $f$ is cohomologous to a constant $c \in \R$, that is, there exists a continuous function $h\colon X \to \R$ such that $f - c = h\circ T - h$. This implies that $S_nf = h\circ T^n - h + cn$ for every $n\in\N$. Then,
\[
\begin{split}
\lim_{n \to \infty}\bigg\Vert\frac{f_n}{n} - c\bigg\Vert_{\infty} &= \lim_{n \to \infty}\bigg\Vert\frac{f_n}{n} - \frac{S_nf}{n} + \frac{h\circ T^n - h}{n}\bigg\Vert_{\infty}\\
& \le \lim_{n \to \infty}\frac1n\Vert f_n - S_nf\Vert_{\infty} + \lim_{n \to \infty}\frac1n\Vert h\circ T^n - h \Vert_{\infty} = 0
\end{split}
\]
Conversely, let the sequence $\big(\frac{f_n}{n}\big)_{n \in \N}$ converge to a constant $c \in \R$. Then
\[
\lim_{n \to \infty}\frac{1}{n}\|f_n - S_nc\|_{\infty} = \lim_{n \to \infty}\|\frac{1}{n}f_n - c\|_{\infty} = 0
\]	
and, by Definition \ref{CHM}, $\cF$ is cohomologous to a constant. 
\end{proof}

From Lemma~\ref{CEQ} together with Lemma~\ref{BVP}, we conclude that Definition~\ref{CHM} is, in fact, equivalent to Definition \ref{bv}. Based on this, we can say that an asymptotically (or almost) additive sequence of continuous functions $\cF = (f_n)_{n \in \N}$ is cohomologous to zero if and only if the sequence $(f_n/n)_{n \in \N}$ is uniformly convergent to zero. 

For the additive case, a continuous function $f: X \to \R$ is cohomologous to zero if and only if $\lim_{n \to \infty}\frac{1}{n}\|S_nf\|_{\infty} = 0$ under the conditions of Liv\v{s}ic Theorem. This implies that, in general, the classical definition of cohomology for a function is way stronger than the one we are suggesting for nonadditive sequences in Definition \ref{CHM}. On the other hand, Proposition \ref{EQC} also suggests a definition of cohomology for nonadditive sequences which is still weaker than the classical one but much stronger than Definition \ref{CHM}:

\begin{definition}\label{SCH}	We say that an almost additive or asymptotically additive sequence of functions $\cF = (f_n)_{n \in \N}$ is cohomologous to a constant if there exists a continuous function $f$ cohomologous to a constant and such that
\[
\sup_{n \in \N}\|f_n - S_nf\|_{\infty} < \infty.
\]
\end{definition}  
In this sense, $\cF$ is cohomologous to zero if and only if $\cF$ is uniformly bounded. 

In the next section, our main result gives a setup where the definitions \ref{bv}, \ref{CHM} and \ref{SCH} are equivalent for almost additive sequences (see Theorem \ref{BOU}). On the other hand, considering the same setup, Example \ref{asym} shows that definition \ref{SCH} is not compatible with definitions \ref{bv} and \ref{CHM} for asymptotically additive sequences. 

\section{A Liv\v{s}ic-type theorem for almost additive sequences}\label{mr}

Let $X$ be a compact metric space and $T: X \to X$ a continuous map. We say that a sequence of functions $\cF = (f_n)_{n \in \N}$ has \emph{bounded variation} if there exists $\epsilon > 0$ such that
\[
\sup_{n \in \N}\sup\{|f_n(x) - f_n(y)|: d_n(x,y) < \epsilon \} < \infty,
\]
where $d_n(x,y) := \max \{d(T^{k}(x), T^{k}(y)): 0 \le k \le n-1\}$. Moreover, we say that $\cF$ has \emph{tempered variation} if 
\[
\limsup_{\epsilon \to 0}\lim_{n \to \infty}\frac{\gamma_n(\epsilon)}{n} = 0,
\] 
where $\gamma_n(\epsilon):= \sup\{|f_n(x) - f_n(y)|: d_n(x,y) < \epsilon \}$.

We note that if $\phi: X \to \R$ satisfies the Bowen property then the additive sequence $(S_n\phi)_{n \in \N}$ has bounded variation. 

The Walters property for functions (additive sequences) also can be extended naturally to the nonadditive case. We say that a sequence of functions $(f_n)_{n \in \N}$ satisfies the \emph{Walters property} if for each $\kappa > 0$ there exists $\epsilon > 0$ such that for $x, y \in X$ and $n \in \N$, we have that $d(T^{k}(x),T^{k}(y)) < \epsilon$ for every $k \in \{0,...,n-1\}$ implies $|f_n(x) - f_n(y)| < \kappa$. 

As in the additive case, it is clear that a sequence satisfying the Walters property also satisfies the Bowen property. Moreover, for subshifts of finite type, uniformly expanding maps and hyperbolic sets for diffeomorphisms, the space of H\"older continuous functions is contained in the class of functions satisfying the Walters property (see for example \cite{Wal78}, \cite{Bou02} and Proposition 20.2.6 in \cite{KH12}).

A point $x \in X$ is said to be \emph{transitive} (with respect to a map $T$) if $\overline{\{T^{n}(x): n \in \N\}} = X$.  On the other hand, we say that a map $T: X \to X$ is \emph{topologically transitive} if for every pair of non-empty open subsets $U, V \subset X$ there exists $n \in \N$ such that $T^n(U) \cap V \neq \emptyset$. In particular, when $X$ is a compact metric space one can show that $T: X \to X$ admits a transitive point if and only if $T$ is topologically transitive (see for example \cite{Sil92}). 

The following theorem is our main result.
\begin{theorem}\label{BOU}
Let $T\colon X \to X$ be a continuous map  satisfying the Closing Lemma and having a point with dense orbit. Let $\cG = (g_n)_{n \in \N}$ be an almost additive sequence of continuous functions (with respect to $T$) with bounded variation. Then, the following are equivalent:
\begin{enumerate}
\item $\lim_{n \to \infty}\|g_{n}\|_{\infty}/n = 0$;

\item $\sup_{n \in \N}\|g_n\|_{\infty} < \infty$;

\item there exists $K > 0$ such that  $|g_n(p)| \le K$ for all $p \in X$ and $n \in \N$ with $T^{n}(p) = p$. 
\end{enumerate}
\end{theorem}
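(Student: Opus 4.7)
The plan is to close the cycle $(2)\Rightarrow (1)$, $(2)\Rightarrow (3)$, $(1)\Rightarrow (3)$, $(3)\Rightarrow (2)$. The first two are immediate from the definitions, so the substantive work lies in $(1)\Rightarrow (3)$ (a purely algebraic consequence of almost additivity) and $(3)\Rightarrow (2)$ (a shadowing argument using the Closing Lemma together with the bounded variation of $\cG$). Throughout, let $C$ denote the almost additive constant of $\cG$ and $(\epsilon_0,M)$ its bounded variation parameters.

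For $(1)\Rightarrow (3)$, fix a periodic point $p$ with $T^np=p$ and $k\ge 1$. Iterating almost additivity and using the periodicity $T^{jn}p=p$ yields $|g_{kn}(p)-k\,g_n(p)|\le (k-1)C$, hence $|g_n(p)|\le\|g_{kn}\|_\infty/k+C$. Since $\|g_{kn}\|_\infty/k=n\,\|g_{kn}\|_\infty/(kn)\to 0$ by~(1), letting $k\to\infty$ gives $|g_n(p)|\le C$, which is~(3) with constant $C$. The very same computation also \emph{sharpens} any constant $K$ appearing in~(3) down to $C$: bound $|g_{kn}(p)|$ by $K$ (valid since $T^{kn}p=p$) in the display above and let $k\to\infty$ to get $|g_n(p)|\le C$. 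I will use this sharpened form of~(3) below.

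For $(3)\Rightarrow (2)$, given $x\in X$ and $n\in\N$ I aim to produce a periodic point $p$ with $T^np=p$ and $d_n(p,x)<\epsilon_0$. Bounded variation then gives $|g_n(x)-g_n(p)|\le M$, the sharpened~(3) gives $|g_n(p)|\le C$, and hence $|g_n(x)|\le C+M$ uniformly in $x$ and $n$, which is~(2). To build $p$, let $z\in X$ be a point with dense orbit and let $\delta>0$ be the Closing Lemma parameter associated to error $\epsilon_0/2$. By uniform continuity of $T^0,\dots,T^{n-1}$, choose $\eta\in(0,\delta]$ so that $d(y,y')<\eta$ implies $d_n(y,y')<\epsilon_0/2$. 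Density of $\{T^kz\}$ near $x$ yields an infinite set of indices $k$ with $d(T^kz,x)<\eta/2$, and pigeonhole on residues modulo $n$ extracts $k_1<k_2$ in that set with $k_2-k_1$ a positive multiple of $n$. Then $d(T^{k_2-k_1}(T^{k_1}z),T^{k_1}z)<\eta\le\delta$, and the Closing Lemma produces $q$ with $T^{k_2-k_1}q=q$ and $d_{k_2-k_1}(q,T^{k_1}z)<\epsilon_0/2$; in particular $d_n(q,x)<\epsilon_0$.

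The main obstacle is that this construction delivers $q$ of period $\ell n$ (where $\ell=(k_2-k_1)/n$) rather than exactly $n$, so the sharpened~(3) bounds $|g_{\ell n}(q)|$ but not directly $|g_n(q)|$. Almost additivity gives only
\[
g_{\ell n}(q)\;=\;\sum_{j=0}^{\ell-1}g_n(T^{jn}q)+O((\ell-1)C),
\]
which controls the \emph{average} of $g_n$ along the $T^n$-orbit of $q$ rather than any individual value. I would close this gap either by sharpening the construction to force $\ell=1$---equivalently, by establishing as a dedicated lemma that period-$n$ periodic points are $d_n$-dense in $X$ under the Closing Lemma plus topological transitivity hypotheses---or, failing that, by combining the averaged identity above with a further application of bounded variation to the orbit $\{T^{jn}q\}_{j=0}^{\ell-1}$ to transfer the averaged bound to the individual term $g_n(q)$. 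This density-in-$d_n$-metric step is the technical heart of the argument and is precisely where the topological hypotheses on $T$ do all the real work.
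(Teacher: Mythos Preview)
Your argument for $(1)\Rightarrow(3)$ is correct and coincides with the paper's Lemma (the paper's Lemma~\ref{UBI}). The difficulty is entirely in $(3)\Rightarrow(2)$, and here your proposal has a genuine gap that neither of your suggested fixes repairs.

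Your Fix~1 asks that period-$n$ points be $d_n$-dense under the stated hypotheses. This is false in general: take a transitive but non-mixing subshift of finite type (for instance the SFT on four symbols whose transition matrix is $\left(\begin{smallmatrix}0&J\\J&0\end{smallmatrix}\right)$ with $J$ the all-ones $2\times 2$ block). This system satisfies the Closing Lemma and has a dense orbit, yet it has \emph{no} periodic points of odd period. Your Fix~2 also fails: the points $T^{jn}q$ for $j=0,\dots,\ell-1$ are not $d_n$-close to one another (they shadow different stretches of the orbit of $z$, which need not return near $x$), so bounded variation gives no relation among the terms $g_n(T^{jn}q)$; the averaged identity then yields only a bound of order $\ell C$ on $|g_n(q)|$, with $\ell$ unbounded as $n$ grows.

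The paper sidesteps both problems by abandoning the attempt to shadow an arbitrary $x$ by a point of period exactly $n$. Instead it works directly with the transitive point $\omega$: since a fixed initial segment $\{\omega,T\omega,\dots,T^{L}\omega\}$ is already $\delta$-dense, for every $n$ one finds $k'\le L$ with $d(T^n\omega,T^{k'}\omega)<\delta$, and the Closing Lemma yields a period-$(n-k')$ point $p$ that $d_{n-k'}$-shadows $T^{k'}\omega$. The crucial gain is that the defect $k'$ is bounded by $L$ \emph{uniformly in $n$}, so almost additivity absorbs it: $|g_n(\omega)|\le|g_{k'}(\omega)|+|g_{n-k'}(T^{k'}\omega)|+C\le\max_{k\le L}|g_k(\omega)|+M+K+C$. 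One then propagates this bound along the orbit of $\omega$ via almost additivity and to all of $X$ by continuity. The key idea you are missing is precisely this: replace ``find a period-$n$ point near $x$'' by ``find a period-$(n-k')$ point near $T^{k'}\omega$ with $k'$ uniformly bounded.''
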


The following result is a direct consequence of Theorem \ref{BOU}.

\begin{corollary}\label{EQ}
Let $\cF = (f_n)_{n \in \N}$ be an almost additive sequence of continuous functions with respect to $T: X \to X$ and with bounded variation. Then, a continuous function $f: X \to \R$ such that $(S_nf)_{n \in \N}$ has bounded variation satisfies 
\[
\lim_{n \to \infty}\frac{1}{n}\|f_n - S_nf\|_{\infty} = 0 \quad \textrm{if and only if} \quad \sup_{n \in \N}\|f_n - S_nf\|_{\infty} < \infty.
\]
	
In particular, if $(S_nf)_{n \in \N}$ does not have bounded variation we have
\[
\sup_{n \in \N}\|f_n -S_nf\| = \infty.
\] 
\end{corollary}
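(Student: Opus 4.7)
The plan is to reduce Corollary~\ref{EQ} to Theorem~\ref{BOU} applied to the \emph{difference} sequence
\[
g_n := f_n - S_nf.
\]
To do this I need to check two things about $\cG = (g_n)_{n \in \N}$: that it is almost additive, and that it has bounded variation. Once both are verified, the equivalence $(1) \Leftrightarrow (2)$ of Theorem~\ref{BOU} applied to $\cG$ gives precisely the first assertion of the corollary.

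First, I would verify almost additivity of $\cG$. Since $(S_nf)_{n \in \N}$ is additive, $S_{m+n}f(x) = S_mf(x) + S_nf(T^m(x))$ exactly, and subtracting this identity from the two-sided almost additive bound
\[
-C + f_m(x) + f_n(T^m(x)) \le f_{m+n}(x) \le f_m(x) + f_n(T^m(x)) + C
\]
yields the corresponding bound for $g_{m+n}(x)$ with the same constant $C$. Next, I would check bounded variation: if $\epsilon_1, \epsilon_2 > 0$ witness the bounded variation of $\cF$ and $(S_nf)_{n \in \N}$ respectively with bounds $M_1, M_2$, then for $\epsilon := \min\{\epsilon_1,\epsilon_2\}$ and any $x,y$ with $d_n(x,y) < \epsilon$ the triangle inequality gives $|g_n(x) - g_n(y)| \le M_1 + M_2$. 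Hence $\cG$ meets both hypotheses of Theorem~\ref{BOU}, and the equivalence of $\lim_{n\to\infty}\|g_n\|_\infty/n = 0$ with $\sup_n \|g_n\|_\infty < \infty$ is exactly what the first part of the corollary asserts.

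For the ``in particular'' claim, I would argue by contrapositive. Suppose $\sup_n \|f_n - S_nf\|_\infty < \infty$. Then the sequence $\cG$ is uniformly bounded, hence trivially of bounded variation (any $\epsilon > 0$ works with bound $2\sup_n\|g_n\|_\infty$). Writing $S_nf = f_n - g_n$, the bounded variation of $\cF$ together with the (trivial) bounded variation of $\cG$ forces $(S_nf)_{n \in \N}$ to have bounded variation as well. Contrapositively, if $(S_nf)_{n \in \N}$ fails to have bounded variation then $\sup_n \|f_n - S_nf\|_\infty = \infty$.

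I do not anticipate a genuine obstacle here: the content of the corollary is entirely in Theorem~\ref{BOU}, and the work consists only of checking that the difference sequence inherits the two structural hypotheses. The only mild subtlety is that bounded variation is defined with respect to a specific $\epsilon$, so one must be careful to pass to a common $\epsilon$ when combining the two bounded variation conditions; this is handled by simply taking the minimum.
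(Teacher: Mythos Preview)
Your proposal is correct and is exactly the argument the paper has in mind: the corollary is stated as ``a direct consequence of Theorem~\ref{BOU}'' with no further proof, and applying that theorem to the difference sequence $g_n = f_n - S_nf$ (after checking it is almost additive and has bounded variation) is the intended route. Your handling of the common $\epsilon$ and the contrapositive for the ``in particular'' clause are the right details to fill in.
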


Notice that Corollary \ref{EQ} readily implies that \textbf{Question A} is in fact equivalent to \textbf{Question B} for topologically transitive maps satisfying the Closing Lemma. We also note that Theorem \ref{BOU} is an extension of Proposition \ref{EQC} to the case of almost additive sequences having bounded variation. 

\begin{remark}
Observe that Proposition \ref{EQC} asks for the sequence of potentials to have the Walters property, which is stronger than the bounded variation condition. This is because the classical cohomology result obtained for a single potential is also stronger than the uniformly bounded one obtained in Theorem \ref{BOU}. As we shall see on section \ref{Liv-sec}, Theorem \ref{BOU} is also particularly related to Theorem 1.2 in \cite{Kal11}, where control over the periodic data implies control over the full data. 
\end{remark}

In order to prove Theorem \ref{BOU}, let us first obtain a key auxiliary result. 

\begin{lemma}\label{UBI}
Let $T\colon X \to X$ be a continuous map and let $\cH = (h_n)_{n \in \N}$ be an almost additive sequence of continuous functions with uniform constant $C > 0$ and such that $\lim_{n \to \infty}\| h_n \|_{\infty}/n = 0$. Then 
	
\begin{enumerate}
\item for every $k$-periodic point $x_0 \in X$, we have $\sup_{q \in \mathbb{N}}|h_{qk}(x_0)| \leq C$;
		
\item for every periodic point $x_0$, there exists a constant $L \ge 0$ (only depending on the period) such that $\sup_{n \in\N}|h_n(x_{0})| \le L$;

\item We have
\[
\sup_{\mu \in \cM_T}\bigg|\int_X h_{n} d\mu\bigg| \leq C \quad \textrm{for all $n \in \N$}. 
\]
\end{enumerate}
\end{lemma}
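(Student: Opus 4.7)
The proof reduces to a single elementary observation that I would state and prove first: if a scalar sequence $(b_n)_{n\in\N}\subset\R$ satisfies $|b_{n+m}-b_n-b_m|\le C$ for all $n,m\in\N$ and additionally $b_n/n\to 0$, then $|b_n|\le C$ for every $n$. I obtain this by iterating the defining inequality to get $|b_{qn}-qb_n|\le(q-1)C$ for all $n,q\in\N$; dividing by $qn$ yields $|b_{qn}/(qn)-b_n/n|\le(q-1)C/(qn)$, and letting $q\to\infty$ the left-hand side tends to $|b_n|/n$ while the right-hand side tends to $C/n$, forcing $|b_n|\le C$. The three parts of the lemma then correspond to three natural scalar sequences extracted from $\cH$, and in each case the hypotheses of almost additivity and sublinear growth will pass to that sequence with the same constant $C$.

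For item (1) I fix a $k$-periodic point $x_0\in X$ and set $a_q:=h_{qk}(x_0)$. The almost-additivity relation applied at $x_0$ with indices $qk,rk$, combined with $T^{qk}(x_0)=x_0$, yields $|a_{q+r}-a_q-a_r|\le C$, while $|a_q|/q\le k\cdot\|h_{qk}\|_\infty/(qk)\to 0$ supplies the sublinearity hypothesis. The scalar observation then gives $|a_q|\le C$. For item (2) I would write an arbitrary $n\in\N$ as $n=qk+r$ with $0\le r<k$ and apply almost-additivity of $\cH$ with indices $qk$ and $r$ at $x_0$; periodicity again gives $|h_n(x_0)-h_{qk}(x_0)-h_r(x_0)|\le C$ for $r\ge 1$ (and $r=0$ reduces directly to (1)), so that item (1) combined with the finite quantity $\max_{1\le r<k}|h_r(x_0)|$ produces a constant $L:=2C+\max_{1\le r<k}|h_r(x_0)|$ depending only on the period of $x_0$.

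For item (3), given $\mu\in\cM_T$ I set $b_n:=\int_X h_n\,d\mu$. Integrating both sides of the almost-additivity inequality and using $T$-invariance of $\mu$ to rewrite $\int h_m\circ T^n\,d\mu=\int h_m\,d\mu$ delivers $|b_{n+m}-b_n-b_m|\le C$, while $|b_n|\le\|h_n\|_\infty$ forces $b_n/n\to 0$ uniformly in $\mu$. The scalar observation from the first paragraph then yields $|b_n|\le C$ with a constant independent of $\mu$, and taking the supremum over $\cM_T$ finishes the proof. There is no real analytical obstacle in the argument; the only subtlety worth flagging is that without the sublinear-growth hypothesis the almost-additivity relation would admit linear solutions $b_n\sim\ell n$, so the assumption $\|h_n\|_\infty/n\to 0$ is what actually forces the uniform boundedness rather than almost-additivity alone.
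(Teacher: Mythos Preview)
Your proof is correct and follows essentially the same scheme as the paper: iterate the almost-additivity inequality to obtain a bound of the form $|h_{pn}-p\,h_n|\le(p-1)C$ (along an orbit or after integration), divide by $p$, and send $p\to\infty$ using the sublinear-growth hypothesis. Your packaging of this as a single scalar lemma up front is a clean unification of what the paper does separately in items (1) and (3).

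One point is worth highlighting. For item (3) the paper appeals to Birkhoff's ergodic theorem applied to $T^n$ in order to pass from the iterated inequality to the integral bound. Your argument bypasses this: you simply integrate the almost-additivity inequality and use $T$-invariance of $\mu$ to get $\int h_m\circ T^n\,d\mu=\int h_m\,d\mu$, which reduces the problem directly to the scalar lemma. This is strictly more elementary and avoids any issue about whether $\mu$ is $T^n$-ergodic. In item (2), your constant $L=2C+\max_{1\le r<k}|h_r(x_0)|$ matches the paper's in spirit; if you want $L$ to depend literally only on the period $k$ (as the statement asserts), replace $|h_r(x_0)|$ by $\|h_r\|_\infty$.
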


\begin{proof}
Since the sequence $\cH$ is almost additive, there exists $C>0$ such that
\[
h_n + h_n\circ T^{n} - C \le h_{2n} \le h_n + h_n\circ T^{n} + C
\]
for every $n\in\N$. One can use induction to show that
\begin{equation}\label{asfh}
\sum_{i=0}^{p-1}h_n(T^{ni}(x)) - (p-1)C \le h_{pn}(x) \le \sum_{i=0}^{p-1}h_n(T^{ni}(x)) + (p-1)C
\end{equation}
for every $n,p\in\N$ and $x \in X$.
	
Let us assume without loss of generality that $T^k(x_0) = x_0$ for some $k \in \N$. If $n = qk$ for some $q \in \N$, then $T^{ni}(x_{0}) = x_{0}$ and so
\[
\lim_{p \to \infty} \frac{1}{p}\sum_{i=0}^{p-1}h_n(T^{ni}(x_{0})) = h_n(x_{0}).
\]
On the other hand, since $\lim_{n \to \infty}\| h_n \|_{\infty}/n = 0$, we have
\[
\lim_{p \to \infty} \frac{h_{pn}(x)}{pn}=0
\]
and so it follows from \eqref{asfh} that
\begin{equation}\label{CC}
-C \le h_n(x_{0}) \le C,
\end{equation}
concluding the proof of item 1. 

Now consider the case $n = qk + r$, with $0 < r < k$. Let
\begin{equation}\label{AB}
A = \min\{h_{r}(x_{0}): r < k\} \quad \text{and} \quad B = \max\{h_{r}(x_{0}): r < k\}.
\end{equation}
By the almost additivity of the sequence $\cH$ together with \eqref{CC} and \eqref{AB}, we have
\[
-2C + A \le -C + h_{qk}(x_0) + h_r(T^{qk}(x_0)) \le h_n(x_0)
\]
and
\[
h_n(x_0) \le h_{qk}(x_0) + h_r(T^{qk}(x_0)) + C \le 2C + B.
\]	
Therefore,
\[
L_2:= \min\{A - 2C, -C\} \le h_n(x_{0}) \le \max\{B + 2C, C\}:= L_1
\]
for every $n\in\N$. Taking $L:= \max\{|L_1|,|L_2|\}$, the item 2 is proved.

Now let us prove item 3. If $\mu$ is a $T$-invariant probability, then it is also $T^{n}$-invariant for every $n \in \N$ and, by using that $\lim_{n \to \infty}\| h_{n} \|_{\infty}/n = 0$ in the inequalities \ref{asfh} together with the Birkhoff's ergodic theorem, we obtain that
\[
\displaystyle -C \leq \int_{X} \lim_{p \to \infty} \frac{1}{p}\sum_{i = 1}^{k -1} g_{n}(T^{in}(x)) d \mu(x) = \int_{X} g_{n} d \mu \leq C
\]
for every $n \in \N$. Since $\mu \in \cM_T$ is arbitrary, the lemma is proved. 
\end{proof}

\emph{Proof of Theorem \ref{BOU}.} 
Since $\cG$ has bounded variation, there exists $\epsilon> 0$ such that
\begin{equation}\label{BVA}
M:= \sup_{n \in \N}\sup\{|g_n(x) - g_n(y)|: d_n(x,y) < \epsilon\} < \infty.
\end{equation}

Let us start proving that 3 implies 2. Suppose that there exists $K > 0$ such that  $|g_n(p)| \le K$ for all $p \in X$ and $n \in \N$ with $T^{n}(p) = p$. Let $\omega \in X$ be a transitive point and let $\delta > 0$ be the number given by the Closing Lemma (Lemma \ref{CCL}). Since the orbit of $\omega$ is dense, there exists a number $L(\omega,\delta) \in \N$ such that for all $x \in X$ and $n \in \N$ there exists some $k \in \{0,1,...,L(\omega,\delta)\}$ with $d(T^{n}(x),T^{k}(\omega)) < \delta$. Letting $n > L(\omega,\delta)$, in particular there exists $k' \in \{0,1,...,L(\omega,\delta)\}$ such that $d(T^{n}(\omega), T^{k'}(\omega)) < \delta$, that is, $d(T^{n-k'}(T^{k'}(\omega)), T^{k'}(\omega)) < \delta$. 

By the Closing Lemma,  there exists a point $p \in X$ with $T^{n-k'}(p) = p$ and such that $d_{n-k'}(T^{k'}(\omega),p) < \epsilon$. Now applying the bounded variation condition \eqref{BVA} we have that
\[
|g_{n-k'}(T^{k'}(\omega)) - g_{n-k'}(p)| \le M,
\]
which implies that  $|g_{n-k'}(T^{k'}(\omega))| \le M + |g_{n-k'}(p)| \le M + K$. This together with almost additivity gives that 
\[
\begin{split}
|g_n(\omega)| = |g_{(n-k')+k'}(\omega)| &\le |g_{k'}(\omega)| + |g_{n-k'}(T^{k'}(\omega))| + C \\ 
& \le \max_{k \in \{0,1,...,L(\omega,\delta)\}}|g_k(\omega)| + M + K + C := K'.
\end{split}
\]
Since $n > L(\omega,\delta)$ was arbitrary, we have $|g_n(\omega)| \le K'$ for all $n \in \N$. By using almost additivity again, we also have that
\[
|g_n(T^{k}(\omega)) + g_{k}(\omega) - g_{n+k}(\omega)| \le C \quad \textrm{for all $n, k \in \N$},
\]
which gives 
\[
|g_n(T^{k}(\omega))| \le |g_k(\omega)| + |g_{n+k}(\omega)| + C \le 2K'+C  \quad \textrm{for all $n, k \in \N$}.
\]
Now let $x \in X$. Since $\omega$ is transitive, there exists a sequence $(\omega_q)_{q \ge 1} \subset \{T^{n}(\omega): n \in \N\}$ such that $\lim_{q \to \infty}\omega_q = x$. Since every function $g_n$ is continuous, we obtain that
\[
|g_n(x)| = \lim_{q \to \infty}|g_n(\omega_q)| \le 2K' + C.
\]

Therefore, by the arbitrariness of $x$, we conclude that $\sup_{n \in \N}\|g_n\|_{\infty} \le 2K'+ C < \infty$ as desired. It is immediate that 2 implies 1. By Lemma \ref{UBI}, it follows that 1 implies 3 and the theorem is proved.  \qed

Theorem \ref{BOU} does not hold for asymptotically additive nor subadditive sequences in general. In order to illustrate this, we give the following simple example.

\begin{example}\label{asym}
Let $T:X \to X$ be a continuous map and let $\cF = (f_n)_{n \in \N}$ be the sequence given by $f_n(x) := \sqrt n$ for every $n\in\N$ and every $x \in X$. It is clear that $\cF$ has bounded variation and is asymptotically additive and also subadditive with respect to $T$. Moreover, we have that $\lim_{n \to \infty}\|f_n\|_{\infty}/n = 0$ but $\sup_{n \in \N}\|f_n\|_{\infty} = \infty$. Actually, we also have
\[
\sup_{n\in\N}\|f_n - S_nf\|_{\infty} = \infty \quad \textrm{for every function $f\colon X \to \R$}.
\]
 
In fact, let us suppose by contradiction that there exist a continuous function $f\colon X \to \R$ and $L > 0$ such that $\sup_{n\in\N} \|f_n - S_nf \|_{\infty} \le L$. Given $x \in X$, we obtain
\[
\sup_{n\in\N} |f_n(x) - S_nf(x)| = \sup_{n\in\N}|\sqrt{n} - S_nf(x)| \le L.
\]
Setting $a_n := f(T^{n-1}(x))$, we have
\begin{equation}\label{SQT}
\sqrt{n} - L \le a_{1} + \dots + a_n \le \sqrt{n} + L.
\end{equation}
Since $x$ is arbitrary, we obtain $\sqrt{n} - L \le a_{k + 1} + \dots + a_{k + n} \le \sqrt{n} + L$ for every $k \in \N$.
Moreover, since these inequalities hold for every sequence with length $n$, we have
\begin{equation}\label{PRT}
p(\sqrt{n} - L) \le a_{1} + \dots + a_{pn} \le p(\sqrt{n} + L)
\end{equation}
for every $p \in \N$.
On the other hand, it follows directly from \eqref{SQT} that
\begin{equation}\label{PNT}
(\sqrt{pn} - L) \le a_{1} + \dots + a_{pn} \le (\sqrt{pn} + L) \quad \textrm{for every $p \in \N$}.
\end{equation}

Comparing \eqref{PRT} with \eqref{PNT} we obtain $p(\sqrt{n} - L) \le \sqrt{pn} + L$, which implies that
\[
\sqrt{n} - L \le \frac{\sqrt{n}}{\sqrt{p}} + \frac{L}{p} \quad \textrm{for every $p \in \N$.}
\]
By letting $p \to \infty$ we obtain $\sqrt{n} - L \le 0$ for every $n \in \N$, which is clearly a contradiction. Therefore, $\sup_{n\in\N}\|f_n - S_nf\|_{\infty} = \infty$ for every function $f \colon X \to \R$, as we claimed. \qed
\end{example}
Example \ref{asym} shows that the definition \ref{SCH} is not equivalent to definition \ref{CHM} for asymptotically additive sequences in general. Moreover, this also implies that Theorem \ref{BOU} has an optimal nonadditive setup in the sense that it cannot be extended to more general sequences, such as asymptotically additive and subadditive ones. 
 
\subsection{Matrix cocycles}\label{Liv-sec}

Let $X$ be a compact metric space and $T: X \to X$ a continuous map. Moreover, let $GL(d,\R)$ be the set of all invertible $d \times d$ matrices. A continuous map $\mathcal{A} : X \times \N \to GL(d,\R)$ is called a \emph{linear cocycle} over $T$ if for all $m,n\in\N$ and $x\in X$ we have:
\begin{enumerate}
\item $\cA(x,0) =$ Id;	
\item $\cA(x,n + m) = \cA(T^{m}(x),n)\cA(x,m)$.
\end{enumerate}
Every cocycle is generated by a function $A: X \to GL(d,\R)$, that is,
\[
\mathcal{A}(x,n) = A(T^{n-1}(x))...A(T(x))A(x) \quad \textrm{for all $n \in \N$ and $x \in X$}.
\]

Assume that the cocycle $\cA$ is generated by a continuous function $A: X \to GL(d,\R)$ which takes values in the set of matrices $d \times d$ with strictly positive entries. Moreover, we consider the pseudo-norm on $GL(d,\R)$ defined by $\|A\| = \sum_{i,j=1}^d |a_{ij}|$, denoting by $a_{ij}$ the entries of~$A$.

Now we consider the sequence of functions $\cF_{A}:= (a_n)_{n \in  \N}$ defined by
\begin{equation}\label{FAA}
a_n(x)= \log\|\cA(x,n)\| \quad \textrm{for all $n \in \N$ and $x \in X$}. 
\end{equation}

It follows from Lemma 2.1 in \cite{FL02} that the sequence $\cF_{A}$ is almost additive with respect to $T$.

Extending the definitions for derivative cocycles in \cite{BG06}, we say that the cocycle $\cA$ has \emph{tempered distortion} if for some $\epsilon > 0$
\[
\limsup_{n \to \infty}\frac{1}{n}\log \sup\left\{\|\cA(x,n) \cA(y,n)^{-1}\|: z \in M \ \text{and} \ x, y \in B_{n}(z,\epsilon)\right\} = 0
\]
Moreover, $\cA$ is said to have \emph{bounded distortion} if 
\[
\sup\left\{\|\cA(x,n) \cA(y,n)^{-1}\|: z \in M \ \text{and} \ x, y \in B_{n}(z,\epsilon)\right\} < \infty
\]
for some $\epsilon > 0$. It is clear that bounded distortion implies tempered distortion. 

Now observe that
\[
\|\cA(x,n) \cA(x,n)^{-1}\| = \|\Id\| = d
\]
for every $(x,n) \in X\times \N$, which implies that
\[
\|\cA(x,n)^{-1}\| \ge d \|\cA(x,n)\|^{-1}.
\]
Then
\begin{equation}\label{FL}
\|\cA(x,n) \cA(y,n)^{-1}\| 
\ge \frac{M}{d}\|\cA(x,n)\|\cdot\|\cA(y,n)^{-1}\| \ge M\|\cA(x,n)\|\cdot\|\cA(y,n)\|^{-1},
\end{equation}
where $M > 0$ is the constant satisfying
\[
1 \ge \frac{\min_{(i,j)} a_{ij}}{\max_{(i,j)} a_{ij}} \ge M,
\]
which exists because, by assumption, the entries of $A$ are all strictly positive.

So, it follows directly from \eqref{FL} that

\[
\bigl\lvert\log \|\cA(x,n)\| - \log \|\cA(y,n)\|\bigr\rvert \le - \log M + \log \|\cA(x,n) \cA(y,n)^{-1}\|.
\]
In particular, for $z \in X$ and $\epsilon > 0$ we have
\[
\sup_{x, y \in B_n(z,\epsilon)}|a_n(x) - a_n(y)| \le -\log M + \log \sup_{x, y \in B_n(z,\epsilon)}\|\cA(x,n) \cA(y,n)^{-1}\|.
\]
Hence, if the cocycle $\cA$ has tempered distortion, then the sequence $\cF_{A}$ has tempered variation, and if $\cA$ has bounded distortion, then $\cF_{A}$ has bounded variation. 

A cocycle $\mathcal{A} : X \times \N \to GL(d,\R)$ is said to satisfy \emph{domination} if there exist $C > 0$ and $\lambda > 0$ and a splitting $\R^{d} = E(x) \oplus F(x)$ such that 
\[
\|\mathcal{A}(x,n)u\| \ge Ce^{\lambda n} \|\mathcal{A}(x,n)v\|
\]
for all unitary vectors $u \in E(x)$ and $v \in F(x)$. For linear cocycles, one also can show that domination implies almost additivity of the sequence $\cF_{A}$ defined in $\eqref{FAA}$ (see for example \cite{BG09} and \cite{BKM} for more information on domination and almost additivity in the case of cocycles). 

Let $GL^{+}(d,\R) \subset GL(d,\R)$ be the set of all matrices with strictly positive entries.
We have the following application of Theorems \ref{BOU}, which is a particular version of one of the main results in \cite{Kal11}:

\begin{proposition}\label{KLN}
Let $T\colon X \to X$ be a topologically transitive continuous map on a compact metric space $X$ and satisfying the Closing Lemma. Let $A: X \to GL^{+}(d,\R)$ be a continuous function which generates a cocycle $\cA: X \times \N \to GL^{+}(d,\R)$ with bounded distortion or let $\cA: X \times \N \to GL(d,\R)$ be a cocycle satisfying domination with bounded distortion. Suppose there exists a compact set $\Omega \subset GL(d,\R)$ such that $\cA(p,n) \subset \Omega$ for all $p \in X$ and $n \in \N$ with $T^{n}(p) = p$. Then there exists a compact set $\widetilde{\Omega}$ such that  $\cA(x,n) \subset \widetilde{\Omega}$ for all $x \in X$ and $n \in \N$. 
\end{proposition}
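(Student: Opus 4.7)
The plan is to apply Theorem~\ref{BOU} twice: once to the almost additive sequence of log-norms and once to the additive sequence of log-determinants generated by $\cA$. Once both $\|\cA(x,n)\|$ and $|\det\cA(x,n)|^{-1}$ are shown to be uniformly bounded, the conclusion will follow from the standard compactness criterion in $GL(d,\R)$.

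First, I would consider $a_n(x) := \log\|\cA(x,n)\|$. Under either of the two hypotheses of the proposition, the discussion preceding the statement already shows that $\cF_A = (a_n)_{n\in\N}$ is almost additive, and that the bounded distortion of $\cA$ implies that $\cF_A$ has bounded variation. Since $\Omega\subset GL(d,\R)$ is compact, the map $M \mapsto |\log\|M\||$ is bounded on $\Omega$, so $|a_n(p)|$ is uniformly bounded over all periodic points $p$ satisfying $T^n(p) = p$. Theorem~\ref{BOU} would then yield a constant $C_1 > 0$ with $\|\cA(x,n)\| \le e^{C_1}$ for every $x\in X$ and $n\in\N$.

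Next, let $A: X \to GL(d,\R)$ be the continuous generator of $\cA$ and set $\varphi(x) := \log|\det A(x)|$, a continuous function on $X$. Since $\det\cA(x,n) = \prod_{k=0}^{n-1}\det A(T^k x)$, the sequence $c_n := S_n\varphi$ is additive, hence almost additive. For $x,y$ in a common Bowen ball $B_n(z,\epsilon)$, bounded distortion gives $\|\cA(x,n)\cA(y,n)^{-1}\| \le K$; interchanging the roles of $x$ and $y$, we also have $\|\cA(y,n)\cA(x,n)^{-1}\| \le K$, so both the ratio $|\det\cA(x,n)|/|\det\cA(y,n)|$ and its reciprocal are uniformly bounded, which shows that $(c_n)$ has bounded variation. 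For periodic $p$, $\cA(p,n)\in\Omega$ compact in $GL(d,\R)$ forces $|\det\cA(p,n)|$ to be bounded above and away from zero, so $|c_n(p)|$ is uniformly bounded. A second application of Theorem~\ref{BOU} would give $C_2 > 0$ with $e^{-C_2} \le |\det\cA(x,n)| \le e^{C_2}$ for all $x$ and $n$.

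Combining the two uniform bounds with the standard inequality $\|M^{-1}\| \le c_d\,\|M\|^{d-1}/|\det M|$ valid for $M\in GL(d,\R)$, the family $\{\cA(x,n) : x\in X,\ n\in\N\}$ is a bounded subset of $GL(d,\R)$ whose inverses are also uniformly bounded, and is therefore contained in a compact set $\widetilde\Omega \subset GL(d,\R)$. The step I expect to be the main obstacle is the verification of bounded variation for $(c_n)$, since this requires transferring the distortion control from $\cA$ to its determinant without any regularity assumption on $A$ beyond continuity; once this is handled via the multiplicativity of $\det$, the rest of the argument is a clean twofold application of the Liv\v{s}ic-type Theorem~\ref{BOU}.
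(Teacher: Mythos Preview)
Your argument is correct, and its first half---applying Theorem~\ref{BOU} to the almost additive sequence $a_n(x)=\log\|\cA(x,n)\|$ to get a uniform bound on $\|\cA(x,n)\|$---is exactly what the paper does. The paper then stops: from $\|\cA(x,n)\|\le e^{\widetilde K}$ it simply observes $\|\cA(x,n)-\Id\|\le e^{\widetilde K}+d$ and declares the result proved, taking $\widetilde\Omega$ to be a closed ball in the ambient matrix space.

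Your second application of Theorem~\ref{BOU} to the additive sequence $c_n=S_n(\log|\det A|)$ is not in the paper and buys you more: it forces $|\det\cA(x,n)|$ to be bounded away from zero, so via the adjugate bound $\|M^{-1}\|\le c_d\|M\|^{d-1}/|\det M|$ you obtain a compact set $\widetilde\Omega\subset GL(d,\R)$, in line with Kalinin's original formulation. The verification of bounded variation for $(c_n)$ that worried you is fine: bounded distortion gives $\|\cA(x,n)\cA(y,n)^{-1}\|\le K$, whence $|\det\cA(x,n)|/|\det\cA(y,n)|=|\det(\cA(x,n)\cA(y,n)^{-1})|$ is bounded by a polynomial in $K$, and symmetry in $x,y$ handles the reciprocal. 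So your route is the paper's argument plus a genuine (if modest) strengthening of the conclusion.
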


\begin{proof}
By the hypotheses, $\cF_{A} = (a_n)_{n \in \N}$ given by $a_n(x):= \log\|\cA(x,n)\|$ is an almost additive sequence of continuous functions. Since the cocycle $\cA$ has bounded distortion, the sequence $\cF_{A}$ has bounded variation. Now suppose there exists a compact $\Omega \subset GL(d,\R)$ where $\cA(p,n) \subset \Omega$ for all $p \in X$ and $n \in \N$ with $T^{n}(p) = p$. Since the map $\cA(p,n) \mapsto \log\|\cA(p,n)\|$ is continuous, there exists $K > 0$ such that $a_n(p) \in [-K,K]$ for all $p \in X$ and $n \in \N$ with $T^{n}(p) = p$. By Theorem \ref{BOU}, there exists a constant $\widetilde{K} > 0$ such that $\sup_{n \in \N}\|a_n\|_{\infty} \le \widetilde{K}$. In particular, this implies that $e^{-\widetilde{K}} \le \|\cA(x,n)\| \le e^{\widetilde{K}}$ for all $x \in X$ and $n \in \N$. 

Therefore
\[
\|\cA(x,n) - \Id\| \le \|\cA(x,n)\| + \|\Id\| \le e^{\widetilde{K}} + d \quad \textrm{for all $x \in X$ and $n \in \N$},
\]
and the result is proved.
\end{proof}

Notice that when the function $A: X \to GL(d,\R)$ is H\"older continuous, the cocycle $\cA$ has in fact the bounded distortion property (see Proposition 5.1 in \cite{Kal11}). Then, Proposition \ref{KLN} is a particular version of Theorem 1.2 in \cite{Kal11} for the case of strictly positive cocycles (or cocycles with domination) satisfying the bounded distortion property.

\begin{remark}
For a general cocycle, the sequence $\cF_{A}$ defined in \eqref{FAA} is only subadditive. Following as in the proof of Proposition \ref{KLN}, one can check that Theorem 1.2 in \cite{Kal11} also gives an important class of subadditve sequences of continuous functions with bounded variation such that a uniform control over the periodic data implies a uniform control over all data. Based on this, one can naturally ask for which other classes of subadditive sequences this result can be extended. 
\end{remark}

\subsection{A characterization of almost additive sequences with the same equilibrium measures}

In this section, we will apply Theorem \ref{BOU} to see how we can identify sequences with the same equilibrium measures just based on the information about the periodic data of the system. 

Let $T\colon X \to X$ be a continuous map of a compact metric space, and let $\cF = (f_n)_{n \in \N}$ be an almost additive sequence of continuous functions with tempered variation (weaker than bounded variation condition). We have the following variational principle (see \cite{Bar06} and \cite{Mum06}: 
\[
P_T(\cF) = \sup_{\mu \in \cM_T}\bigg(h_{\mu}(T) + \lim_{n \to \infty}\frac1n\int_{X}f_n d\mu\bigg),
\]
where $P_T(\cF)$ is the nonadditive topological pressure of $\cF$ with respect to $T$ introduced in \cite{Bar96}, and $h_{\mu}(T)$ is the Kolmogorov-Sinai entropy. A measure $\nu \in \cM_T$ is said to be an \emph{equilibrium measure} for $\cF$ (with respect to $T$) if the supremum is attained in $\nu$, that is,
\[
P_T(\cF) = h_{\nu}(T) + \lim_{n \to \infty}\frac1n\int_{X}f_n d\nu.
\]
When $\cF$ is an additive sequence generated by a single continuous function $f: X \to \R$, one can easily see that
\[
P_T(\cF) = P_T(f) \quad \textrm{and} \quad \lim_{n \to \infty}\frac1n\int_{X}f_n d\mu = \int_{X}fd\mu \quad \textrm{for all $\mu \in \cM_T$},
\]
where $P_T(f)$ is the classical (additive) topological pressure of $f$ with respect to the map $T$. In this case, we recover the classical notions of topological pressure, variational principle and equilibrium measures for functions. For asymptotically and almost additive sequences, Theorem \ref{NCT} allows us to obtain the notion of nonadditive topological pressure and variational principle directly from the classical theory. 

It was introduced in \cite{Bar06} a definition of Gibbs measures with respect to almost additive sequences using Markov partitions. One also can define Gibbs measures with respect to almost additive sequences in a more general way, without the requirement of Markov partitions. We say that $\mu$ (not necessarily $T$-invariant) is a \emph{Gibbs measure} with respect to $\cF$ if for each $\epsilon > 0$ there exists a constant $K = K(\epsilon) \ge 1$ such that
\[
K^{-1} \le \frac{\mu(B_n(x,\epsilon))}{\exp[-nP_T(\cF) + f_n(x)]}\le K
\]
for all $x \in X$ and $n\in\N$, where $B_n(x,\epsilon) := \{y \in X: d_n(x,y) < \epsilon\}$. These two notions of Gibbs measures are equivalent when the system admits Markov partitions with arbitrarily small diameter, as in the case of locally maximal hyperbolic sets or repellers for $C^{1}$ diffeomorphisms (see for example \cite{Bow75a}). 

In the case of shifts $\sigma: \Sigma^{\N} \to \Sigma^{\N}$, the definition is simpler. A measure $\mu$ on $\Sigma^{\N}$ is said to be \emph{Gibbs} with respect to $\cF = (f_n)_{n \in \N}$ when there exists a constant $K \ge 1$ such that
\begin{equation}\label{NAG}
K^{-1} \le \frac{\mu_n(C_{i_1 \ldots i_n})}{\exp(-nP_{\sigma}(\cF) + f_n(x))} \le K
\end{equation}

for all $x \in C_{i_1 \ldots i_n}$ and $n \in \N$, where $C_{i_1\ldots i_n}$ is the set 
\[
C_{i_1 \ldots i_n}:= \{y = (j_1 j_2 \cdots) \in \Sigma^{\N}: j_1 = i_1,\ldots, j_n = i_n\}.
\]
These definitions are natural nonadditive versions of the classical Gibbs definitions with respect to a single continuous function (see definition \eqref{GIB}). 

Now let's recall the definition of uniformly expanding maps and repellers. Let $M$ be a Riemannian manifold, $T:M \to M$ a $C^{1}$ map, and let $\Lambda \subset M$ be a compact $T$-invariant set, that is, $T^{-1}(\Lambda) = \Lambda$. The map $T$ is said to be \emph{uniformly expanding} on $\Lambda$ if there exist constants $c > 0$ and $\lambda > 1$ such that 
\[
\|d_xf^{n}v\| \ge c\lambda^{n}\|v\| \quad \textrm{for all $x \in \Lambda$, $n \in \N$ and $v \in T_xM$}.
\]
In this case, the set $\Lambda$ is called a \emph{repeller} of $T$.  The following result is a criteria for uniqueness of equilibrium measures for nonadditive sequences.

\begin{proposition}[{\cite[Theorem~5]{Bar06}}]\label{BAR}
Let $\Lambda$ be a repeller of a $C^1$ topologically mixing map $T\colon \Lambda \to~\Lambda$, and let $\cF = (f_n)_{n \in \N}$ be an almost additive sequence of continuous functions on $\Lambda$ with bounded variation. Then $\cF$ admits a unique equilibrium measure, which also satisfies the Gibbs property with respect to $\cF$. 
\end{proposition}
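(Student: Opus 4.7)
My plan is to follow the standard strategy of reducing to a symbolic setting via Markov partitions and then constructing the equilibrium measure through a nonadditive analogue of the Ruelle--Perron--Frobenius transfer operator. Since $T$ is a $C^{1}$ topologically mixing expanding map on the repeller $\Lambda$, classical results give Markov partitions of arbitrarily small diameter; this yields a finite-to-one semiconjugacy between a topologically mixing one-sided subshift of finite type $(\Sigma_A,\sigma)$ and $(\Lambda,T)$, and the almost additive sequence $\cF$ pulls back to an almost additive sequence $\widetilde{\cF} = (\widetilde{f}_n)_{n \in \N}$ on $\Sigma_A$ retaining the bounded variation property on cylinders.

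Next I would construct the Gibbs measure on $\Sigma_A$ by introducing, for each $n \in \N$, the operator
\[
(\mathcal{L}_n g)(x) = \sum_{\sigma^n(y) = x} e^{\widetilde{f}_n(y)} g(y).
\]
Almost additivity provides the sub-multiplicative control $e^{-C}\mathcal{L}_n(\mathcal{L}_m g) \le \mathcal{L}_{m+n} g \le e^{C}\mathcal{L}_n(\mathcal{L}_m g)$ on positive functions, while bounded variation controls the oscillation of $\widetilde{f}_n$ across points in the same $n$-cylinder. Combining these, one can adapt Bowen's and Ruelle's arguments to show that $\|\mathcal{L}_n \mathbf{1}\|_{\infty}$ grows like $e^{nP}$ with $P = P_{\sigma}(\widetilde{\cF})$, and that suitably normalized iterates converge, up to uniformly bounded multiplicative errors, to produce a probability measure $\mu$ satisfying the Gibbs inequality \eqref{NAG}. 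Topological mixing of $\sigma$ ensures aperiodicity and irreducibility of the transition matrix, which delivers uniqueness of such a Gibbs probability.

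The remaining step is to verify that $\mu$ is in fact the unique equilibrium measure. Using the Gibbs property, one first computes $\lim_{n \to \infty}\tfrac{1}{n} \int \widetilde{f}_n\, d\mu = P - h_{\mu}(\sigma)$, so $\mu$ attains the supremum in the variational principle. For any other $\sigma$-invariant $\mu'$, the same Gibbs control combined with the Shannon--McMillan--Breiman theorem on cylinder partitions yields strict inequality via a standard relative-entropy-type argument (Jensen's inequality applied on any cylinder where $\mu$ and $\mu'$ differ substantially). Pushing $\mu$ forward through the semiconjugacy, and noting that the boundary of the Markov partition has zero measure, gives the unique equilibrium measure for $\cF$ on $\Lambda$, which inherits the Gibbs property with respect to Bowen balls.

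The main obstacle is developing the transfer-operator theory in the nonadditive regime: in the additive case one has a single operator whose Perron eigenvalue and eigenfunction control everything, but here the family $(\mathcal{L}_n)_{n \in \N}$ lacks an exact semigroup structure, so one must carefully track how the almost additive constant $C$ and the bounded variation constant $M$ accumulate across iterations in order to recover uniform Gibbs bounds. An alternative tempting route would be to apply Theorem~\ref{NCT} to replace $\cF$ by a physically equivalent additive sequence $S_nf$ and invoke the classical theory, but since the function $f$ produced by that theorem need not be Bowen (indeed, this is precisely what \textbf{Question A} asks), such a reduction does not directly yield the Gibbs property, and the direct nonadditive construction sketched above appears unavoidable.
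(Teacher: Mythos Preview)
The paper does not supply its own proof of this proposition: it is quoted verbatim as \cite[Theorem~5]{Bar06} (with \cite{Mum06} mentioned as a parallel source) and used as a black box in the proofs of Theorem~\ref{CEM} and Theorem~\ref{BKM-H}. So there is nothing in the paper to compare your proposal against; the authors simply invoke the result from the literature.

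For what it is worth, your sketch is a faithful outline of the strategy actually carried out in \cite{Bar06} and \cite{Mum06}: reduction to a mixing subshift via Markov partitions, control of the nonadditive transfer operators $\mathcal{L}_n$ through the almost-additivity constant $C$ and the bounded-variation constant, extraction of a Gibbs measure from the normalized iterates, and uniqueness via a relative-entropy argument. Your closing remark is also on point: the shortcut through Theorem~\ref{NCT} fails precisely because the continuous function it produces need not be Bowen, which is why a genuinely nonadditive construction is required.
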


Proposition \ref{BAR} also holds in the case of full shifts of finite type, topologically mixing subshifts of finite type and locally maximal hyperbolic sets for topologically mixing $C^{1}$ diffeomorphisms (see also Theorem 6 in \cite{Mum06}).

In the following result, we use Theorem \ref{BOU} to obtain a characterization of almost additive sequences based on equilibrium measures.
\begin{theorem}\label{CEM}
Let $\Lambda$ be a repeller of a $C^1$ topologically mixing map $T\colon \Lambda \to~\Lambda$. Let $\cF = (f_n)_{n \in \N}$ and $\cG = (g_n)_{n \in \N}$ be two almost additive sequences of continuous functions with bounded variation. Then $\cF$ and $\cG$ have the same equilibrium measure if and only if there exists a constant $K > 0$ such that $|f_n(p)-g_n(p) -n(P(\cF)-P(\cG))| \le K$ for all $p \in X$ and $n \in \N$ with $T^{n}(p) = p$. 
\end{theorem}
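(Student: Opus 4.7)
The plan is to apply Theorem \ref{BOU} to the auxiliary sequence $\cH=(h_n)_{n\in\N}$ defined by
\[
h_n := f_n - g_n - n\bigl(P_T(\cF)-P_T(\cG)\bigr).
\]
First I would verify that $\cH$ is almost additive with bounded variation. Almost additivity follows because $h_{n+m}(x)-h_n(x)-h_m(T^n x)$ equals the difference of the corresponding quantities for $\cF$ and $\cG$ (the linear term in $n$ cancels), so one may take the almost additive constant of $\cH$ to be the sum of those for $\cF$ and $\cG$. Bounded variation is immediate since the $n(P_T(\cF)-P_T(\cG))$ term is a constant in $x$ and therefore disappears in the difference $h_n(x)-h_n(y)$.

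For the forward (only if) direction, I would invoke the Gibbs property supplied by Proposition \ref{BAR}: the unique equilibrium measures $\mu_{\cF}$ and $\mu_{\cG}$ satisfy, for some $K_{\cF},K_{\cG}\ge 1$ and some $\epsilon>0$,
\[
K_{\cF}^{-1}\le\frac{\mu_{\cF}(B_n(x,\epsilon))}{\exp[-nP_T(\cF)+f_n(x)]}\le K_{\cF},\qquad K_{\cG}^{-1}\le\frac{\mu_{\cG}(B_n(x,\epsilon))}{\exp[-nP_T(\cG)+g_n(x)]}\le K_{\cG}.
\]
If $\mu_{\cF}=\mu_{\cG}$, dividing these two chains of inequalities yields $|h_n(x)|\le\log(K_{\cF}K_{\cG})$ for every $x\in\Lambda$ and every $n$, which in particular gives the desired periodic bound.

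For the reverse (if) direction, the hypothesis is precisely condition (3) of Theorem \ref{BOU} applied to $\cH$. The theorem therefore gives condition (1), namely $\lim_{n\to\infty}\|h_n\|_{\infty}/n=0$. Setting $\widetilde{g}_n:=g_n+n(P_T(\cF)-P_T(\cG))$, this says that $\cF$ is physically equivalent to the almost additive sequence $\widetilde{\cG}=(\widetilde{g}_n)_{n\in\N}$. Since $\lim\frac{1}{n}\int f_n\,d\mu=\lim\frac{1}{n}\int\widetilde{g}_n\,d\mu$ for every $\mu\in\cM_T$, the variational principle quoted in the preceding subsection shows $P_T(\cF)=P_T(\widetilde{\cG})$ and that $\cF$ and $\widetilde{\cG}$ share the same equilibrium measures. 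Finally, the equilibrium measures of $\widetilde{\cG}$ coincide with those of $\cG$, because adding the constant sequence $n(P_T(\cF)-P_T(\cG))$ to the functional $h_\mu(T)+\lim\frac{1}{n}\int g_n\,d\mu$ shifts it by a constant independent of $\mu$ and therefore does not alter its maximizer. Combining these identifications gives $\mu_{\cF}=\mu_{\cG}$.

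The only step requiring some care is the verification that physical equivalence of almost additive sequences preserves equilibrium measures; this rests on the fact that $\lim\frac{1}{n}\int f_n\,d\mu$ exists for every $T$-invariant $\mu$ when $\cF$ is almost additive (a consequence of subadditivity and Kingman's theorem), and is continuous in the physical-equivalence seminorm $\limsup\frac{1}{n}\|\cdot\|_\infty$. Beyond that, the argument is an essentially formal application of Theorem \ref{BOU} and the Gibbs property, with the auxiliary sequence $\cH$ serving as the bridge between the periodic data hypothesis and the measure-theoretic conclusion.
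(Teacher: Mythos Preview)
Your proposal is correct and follows essentially the same route as the paper: define the auxiliary almost additive sequence $h_n = f_n - g_n - n(P_T(\cF)-P_T(\cG))$, obtain the ``only if'' direction by dividing the two Gibbs inequalities furnished by Proposition~\ref{BAR}, and obtain the ``if'' direction by feeding the periodic bound into Theorem~\ref{BOU} and then invoking the variational principle. The only cosmetic difference is that the paper extracts condition~(2) (uniform boundedness of $\cH$) from Theorem~\ref{BOU} rather than condition~(1), and cites Lemma~\ref{UBA} for the equality of the integral limits, whereas you deduce that equality directly from physical equivalence; both routes are equally valid.
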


In order to prove this, we first need the following general result:

\begin{lemma}\label{UBA}
Let $X$ be a compact metric space, $T: X \to X$ a continuous map and let $\cF=(f_n)_{n \in \N}$ be an asymptotically additive sequence of continuous functions (with respect to $T$). Then
\[
\lim_{n \to \infty}\frac{1}{n}\|f_n\|_{\infty} = \sup_{\mu \in \cM_T} \bigg|\lim_{n \to \infty}\frac{1}{n}\int_{X}f_n d\mu\bigg|.
\]	
\end{lemma}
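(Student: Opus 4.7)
The strategy is to reduce to the additive case via Theorem~\ref{NCT}, and then handle the resulting additive identity with Fekete's lemma, Birkhoff's ergodic theorem, and a Krylov--Bogolyubov empirical-measure argument. Using Theorem~\ref{NCT}, I would start by choosing a continuous $f\colon X\to\R$ with $\frac{1}{n}\|f_n-S_nf\|_\infty\to 0$. From $\bigl|\,\|f_n\|_\infty-\|S_nf\|_\infty\,\bigr|\le\|f_n-S_nf\|_\infty$ together with the subadditivity of $n\mapsto\|S_nf\|_\infty$ (which, by Fekete's lemma, yields convergence of $\|S_nf\|_\infty/n$), the limit $\lim_{n\to\infty}\|f_n\|_\infty/n$ exists and equals $L:=\lim_{n\to\infty}\|S_nf\|_\infty/n$. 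Similarly, for each $\mu\in\cM_T$ the $T$-invariance of $\mu$ gives $\frac{1}{n}\int S_nf\,d\mu=\int f\,d\mu$, so $\lim_{n\to\infty}\frac{1}{n}\int f_n\,d\mu$ exists and equals $\int f\,d\mu$. Hence it suffices to establish
\[
L=\sup_{\mu\in\cM_T}\bigg|\int_X f\,d\mu\bigg|.
\]

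For the inequality $\sup_{\mu\in\cM_T}\bigl|\int f\,d\mu\bigr|\le L$, I would fix $\mu\in\cM_T$ and apply Birkhoff's ergodic theorem to obtain $\tilde f\in L^1(\mu)$ with $\frac{1}{n}S_nf\to\tilde f$ $\mu$-a.e.\ and $\int\tilde f\,d\mu=\int f\,d\mu$. Since $\bigl|\frac{1}{n}S_nf\bigr|\le\frac{1}{n}\|S_nf\|_\infty$ pointwise, we have $|\tilde f|\le L$ almost surely and therefore $\bigl|\int f\,d\mu\bigr|=\bigl|\int\tilde f\,d\mu\bigr|\le L$.

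For the reverse inequality, for each $n$ I would pick $x_n\in X$ with $|S_nf(x_n)|=\|S_nf\|_\infty$ (which exists by compactness of $X$ and continuity of $S_nf$), and form the empirical measures $\mu_n:=\frac{1}{n}\sum_{k=0}^{n-1}\delta_{T^k(x_n)}$, so that $\int f\,d\mu_n=\frac{1}{n}S_nf(x_n)$. Since $\bigl|\frac{1}{n}S_nf(x_n)\bigr|\le\|f\|_\infty$, I can pass to a subsequence along which $\frac{1}{n_j}S_{n_j}f(x_{n_j})\to c$ with $|c|=L$. Weak-$*$ compactness of the space of Borel probabilities on $X$ lets me further assume $\mu_{n_j}\to\mu$, and the standard Krylov--Bogolyubov calculation shows $\mu\in\cM_T$. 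Passing to the limit in $\int f\,d\mu_{n_j}=\frac{1}{n_j}S_{n_j}f(x_{n_j})$ yields $\int f\,d\mu=c$, so $\bigl|\int f\,d\mu\bigr|=L$ and the identity follows.

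The main obstacle I anticipate is the bookkeeping in the lower-bound construction: one must simultaneously extract a subsequence along which the signed quantities $\frac{1}{n_j}S_{n_j}f(x_{n_j})$ converge (to $\pm L$) and the empirical measures converge weak-$*$ to a $T$-invariant probability. This is easily arranged by a compactness-based diagonal argument, and the remainder of the proof is a clean combination of Theorem~\ref{NCT} with the classical additive variational-type identity, itself a standard consequence of Birkhoff and Krylov--Bogolyubov.
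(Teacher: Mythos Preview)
Your proof is correct and follows essentially the same route as the paper: both reduce to the additive identity $\lim_{n}\frac{1}{n}\|S_nf\|_\infty=\sup_{\mu\in\cM_T}\bigl|\int f\,d\mu\bigr|$ via Theorem~\ref{NCT}. The only difference is that the paper quotes this additive identity from the literature (Proposition~2.1 in \cite{IP84} together with Lemma~2.2 in \cite{Cun20}), whereas you supply a self-contained proof of it via Fekete's lemma, Birkhoff's theorem, and a Krylov--Bogolyubov argument.
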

\begin{proof}
Proposition 2.1 in \cite{IP84} together with Lemma 2.2 in \cite{Cun20} gives that
\begin{equation}\label{aaa}
\lim_{n \to \infty}\frac{1}{n}\|S_nf\|_{\infty} = \sup_{\mu \in \cM_T} \bigg|\int_{X}f d\mu\bigg|
\end{equation}
for all continuous functions $f: X \to \R$. By Theorem \ref{NCT}, there exists a continuous function $f$ such that $(S_nf)_{n \in \N}$ is physically equivalent to $\cF$. Hence, in particular, 
\[
\lim_{n \to \infty}\frac{1}{n}\|f_n\|_{\infty} = \lim_{n \to \infty}\frac{1}{n}\|S_nf\|_{\infty} \quad \textrm{and} \quad \lim_{n \to \infty}\frac{1}{n}\int_{X}f_n d\mu  = \int_{X}f d\mu \quad \textrm{for all $\mu \in \cM_T$}.
\]
This together with \eqref{aaa} yields the desired result. 
\end{proof}

\emph{Proof of Theorem \ref{CEM}.}
If we suppose $|f_n(p)-g_n(p)-n(P(\cF) - P(\cG))| \le K$ for all $p \in X$ and $n \in \N$ with $T^{n}(p) = p$, Theorem \ref{BOU} gives that $\sup_{n \in \N}\|f_n - g_n -n(P(\cF) - P(\cG))\|_{\infty} < \infty$. By the definition of nonadditive topological pressure and Lemma \ref{UBA}, respectively, we have
\[
P(\cF) = P(\cH) \quad \textrm{and} \quad \lim_{n \to \infty}\frac{1}{n}\int_{X}f_n d\mu  =  \lim_{n \to \infty}\frac{1}{n}\int_{X}h_n d\mu \quad \textrm{for all $\mu \in \cM_T$}.
\]
where $\cH = (h_n)_{n \in \N}$ is the sequence $h_n:= g_n + n(P(\cF)-P(\cG))$. This readily implies that $\cF$ and $\cH$ have the same equilibrium measures. Since $\cH$ and $\cG$ also share the same equilibrium measure, the same is true for $\cF$ and $\cG$.
 	
Now let's prove the converse. By Proposition \ref{BAR}, the sequences $\cF$ and $\cG$ have unique equilibrium measures, and they satisfy the Gibbs property. Now suppose these equilibrium measures are the same unique measure $\nu \in \cM_T$. By the Gibbs property, for each $\epsilon > 0$ there exist constants $K_1 = K_1(\epsilon) \ge 1$ and $K_2 = K_2(\epsilon) \ge 1$ such that 
\[
K_1^{-1} \le \frac{\nu(B_n(x,\epsilon))}{\exp[-nP_T(\cF) + f_n(x)]}\le K_1 \quad \textrm{and} \quad K_2^{-1} \le \frac{\nu(B_n(x,\epsilon))}{\exp[-nP_T(\cG) + g_n(x)]}\le K_2
\]
for all $x \in X$ and $n\in\N$. This implies that 
\[
K_1^{-1}K_2^{-1} \le \exp[f_n(x)-g_n(x) - n(P_T(\cF)-P_T(\cG))]\le K_1K_2
\]
for all $x \in X$ and $n\in\N$. Then, 
\[
\|f_n - g_n - n(P_T(\cF)-P_T(\cG))\|_{\infty} \le \log(K_1K_2).
\]
In particular, we have that $|f_n(p)-g_n(p) -n(P_T(\cF)-P_T(\cG))| \le \log(K_1K_2)$ for all $p \in X$ and $n \in \N$ with $T^{n}(p) = p$, as desired.  \qed \\

\begin{remark}
By the equivalences in Theorem \ref{BOU}, the statement of Theorem \ref{CEM} also could be: $\cF$ and $\cG$ have the same equilibrium measure if and only if the sequence $\cF - \cG$ is cohomologous to the constant $P_T(\cF) - P_T(\cG)$, in the sense of definitions \ref{bv}, \ref{CHM} and \ref{SCH}. In this context, Theorem \ref{CEM} can be seen as a nonadditive counterpart of some classical additive results, for example, Theorem 12.2.3 in \cite{VO16} and Propositions 20.3.9 and 20.3.10 in \cite{KH12}.
\end{remark}

\begin{remark}
Based on the different versions of results concerning the uniqueness of equilibrium measures for almost additive sequences, Theorem \ref{CEM} also holds for topologically mixing subshifts of finite type, locally maximal hyperbolic sets for topologically mixing $C^{1}$ diffeomorphisms, and repellers of topologically mixing $C^{1}$ maps (see \cite{Bar06} and \cite{Mum06}). 
\end{remark}

\section{Regularity of almost and asymptotically additive sequences of continuous potentials}

In this section we will address the regularity problem for nonadditive sequences. We will start with some simpler cases in the non-hyperbolic context, then we will eventually attack and discuss the hyperbolic and related setups.  

\subsection{Non-hyperbolic setups}

In this subsection, we will consider maps where the periodic points are dominant and there is no transitive data, and setups where the transitive points are dominant and we have no periodic data. 

Let us now begin with an example of a setup where \textbf{Question A} and \textbf{Question B} can always be affirmatively answered. 

\begin{example}\label{RR} (Rational rotations)
Let $R_{\alpha}: \mathbb{T}^{n} \to \mathbb{T}^{n}$ be the rational rotation on the $n$-torus, that is, $R_{\alpha}(x) = x + \alpha \bmod 1$ with $\alpha \in \mathbb{Q}^{n}$. Considering $\cF := (f_n)_{n \in \N}$ any almost additive sequence of continuous functions with respect to $R_{\alpha}$, by Theorem \ref{NCT} there exists a continuous function $f: \mathbb{T}^{n} \to \R$ such that $\lim_{n \to \infty}\|f_n - S_nf\|_{\infty}/n = 0$. 
Since every point is periodic with the same period, by Lemma \ref{UBI} there exists a constant $L > 0$ such that $\sup_{n \in \N}\|f_n - S_nf\|_{\infty} \le L < \infty$. Notice that this uniform bound exists regardless of the sequence $\cF$ having bounded variation. Moreover, in this case, $\cF$ has bounded variation if and only if $(S_nf)_{n \in \N}$ has bounded variation. \qed
\end{example}

\begin{remark}
Example \ref{RR} is actually more general. In fact, by Lemma \ref{UBI} we obtain the same result for maps having at least a dense set of periodic points with a bounded period. This class of examples include periodic maps in general. 
\end{remark}

Let us now investigate what happens with some systems without periodic points. 

\begin{example}\label{UED} (Uniquely ergodic maps) Let $X$ be a compact metric space and consider $T: X \to X$ a uniquely ergodic map. Let $\cF:= (f_n)_{n \in \N}$ be an almost additive sequence of continuous functions with respect to $T$ and denote by $\nu$ the unique $T$-invariant measure. Then, in particular
\begin{equation}\label{UEM}
\lim_{n \to \infty}\frac{1}{n}\bigg\|S_n\phi - n\int_{X}\phi d\nu\bigg\|_{\infty} = 0 \quad \textrm{for all continuous functions $\phi: X \to \R$}.
\end{equation}
By Theorem \ref{NCT}, there exists a continuous function $f: X \to \R$ such that 
\[
\lim_{n \to \infty}\|f_n - S_nf\|_{\infty}/n = 0.
\]

This together with \eqref{UEM} yields
\[
\lim_{n \to \infty}\frac{1}{n}\bigg\|f_n - n\int_X f d\nu\bigg\|_{\infty} \le \lim_{n \to \infty}\frac{1}{n}\|f_n - S_nf\|_{\infty} + \lim_{n \to \infty}\frac{1}{n}\bigg\|S_nf - n\int_{X}f d\nu\bigg\|_{\infty} = 0.
\]
	
Hence, in the case of uniquely ergodic maps, there always exists a continuous function $f:~X \to~\R$ such that 
\[
\lim_{n \to \infty}\frac{1}{n}\bigg\|f_n - n\int_X f d\nu\bigg\|_{\infty} = 0,
\]
that is, the function given by Theorem \ref{NCT} can be taken as the constant $\int_{X}f d\nu$. 
	
Observe that the additive sequence $(S_n\int_X fd\nu)_{n \in \N}$ has bounded variation, and in this setup we can always answer \textbf{Question A} affirmatively. \qed  
\end{example}

The following classical result is due to Gottshalk and Hedlund (see for example Theorem 4.2 in \cite{FG21}). 

\begin{proposition}\label{GH55} Let $X$ be a compact metric space, $T: X \to X$ a homeomorphism such that every orbit is dense, and $\varphi: X \to \R$ a continuous function. Then the following assertions are equivalent:
\begin{enumerate}
\item there exists a continuous function $q: X \to \R$ such that $\varphi = q \circ T - q$;

\item there exists a point $x_0 \in X$ such that $\sup_{n \in \N}|S_n\varphi(x_0)| < \infty.$

\item $\sup_{n \in \N}\|S_n\varphi\|_{\infty} < \infty$. 
\end{enumerate}

\end{proposition}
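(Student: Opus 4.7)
The plan is to establish the chain $(1) \Rightarrow (3) \Rightarrow (2) \Rightarrow (1)$. The first two implications are essentially immediate: for $(1) \Rightarrow (3)$, the cohomological equation $\varphi = q \circ T - q$ telescopes to $S_n\varphi = q \circ T^n - q$, giving $\|S_n\varphi\|_\infty \le 2\|q\|_\infty$ uniformly in $n$, which is finite since $q$ is continuous on the compact space $X$; and $(3) \Rightarrow (2)$ follows by evaluating the uniform supremum bound at any $x_0 \in X$.

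The substance lies in $(2) \Rightarrow (1)$, which is the classical Gottschalk--Hedlund theorem. Set $M := \sup_{n \in \N} |S_n\varphi(x_0)|$. The naive attempt is to define $q$ along the forward semiorbit of $x_0$ by $q(T^n x_0) := S_n\varphi(x_0)$, in which case $q \circ T - q = \varphi$ holds on the orbit by construction and $|q| \le M$ there. Since every orbit of the homeomorphism $T$ is dense, a standard minimality argument shows every forward semiorbit is also dense in $X$, so the task reduces to extending $q$ continuously to all of $X$. The obstacle is that uniform continuity of this formally defined $q$ along the orbit is not a direct consequence of the pointwise bound in (2).

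To bypass this, I would pass to the skew product $\widetilde T: X \times \R \to X \times \R$ given by $\widetilde T(x, s) := (T(x), s + \varphi(x))$, a homeomorphism whose forward orbit of $(x_0, 0)$ is $\{(T^n x_0, S_n\varphi(x_0)) : n \ge 0\}$ and is confined by (2) to the compact set $X \times [-M, M]$. Let $Y$ denote the closure of this orbit; then $Y$ is compact and forward $\widetilde T$-invariant, so the nested intersection $\bigcap_{n \ge 0} \widetilde T^n(Y)$ is compact, nonempty, and fully $\widetilde T$-invariant. Applying Zorn's lemma within this intersection yields a minimal nonempty closed $\widetilde T$-invariant subset $Z$. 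Its projection $\pi(Z) \subset X$ is closed, nonempty and $T$-invariant, so the minimality of $T$ forces $\pi(Z) = X$.

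The heart of the argument, which I expect to be the most delicate step, is to show that $Z$ is the graph of a function $q: X \to \R$. Once this is done, the $\widetilde T$-invariance of $Z$ directly encodes $q(Tx) = q(x) + \varphi(x)$, and continuity of $q$ follows from the fact that $\pi|_Z$ is a continuous bijection between compact Hausdorff spaces, hence a homeomorphism. For the graph property one exploits the fiberwise vertical-translation structure forced by $\widetilde T$-equivariance: setting $Z_x := \{s \in \R : (x, s) \in Z\}$, one has $Z_{Tx} = Z_x + \varphi(x)$, so all fibers of $Z$ are translates of one another. If some fiber $Z_{x_1}$ contained two distinct points separated by $d > 0$, the set $Z' := \{(x, s) \in Z : (x, s + d) \in Z\}$ would be a nonempty closed $\widetilde T$-invariant subset of $Z$, hence equal to $Z$ by minimality; iterating this closure property would then produce an arithmetic progression of step $d$ in every vertical fiber, contradicting $Z \subset X \times [-M, M]$. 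Therefore each fiber of $Z$ is a singleton and the proof concludes.
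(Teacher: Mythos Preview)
The paper does not prove this proposition; it merely states it as the classical Gottschalk--Hedlund theorem and cites Theorem~4.2 in \cite{FG21}. There is therefore no proof in the paper to compare against.

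Your argument is correct and is one of the standard routes to Gottschalk--Hedlund. The implications $(1)\Rightarrow(3)\Rightarrow(2)$ are immediate, and your skew-product treatment of $(2)\Rightarrow(1)$ is sound: the minimal $\widetilde T$-invariant set $Z$ exists by Zorn (nested compact sets have nonempty intersection), $\pi(Z)=X$ follows from minimality of $T$, and the fiber argument via $Z'=\{(x,s)\in Z:(x,s+d)\in Z\}$ correctly forces singleton fibers. One point worth making explicit: to conclude $Z'=Z$ by minimality you need $Z'$ to be invariant under $\widetilde T^{-1}$ as well as $\widetilde T$; this holds precisely because you arranged $Z$ to be \emph{fully} $\widetilde T$-invariant by first passing to $\bigcap_{n\ge 0}\widetilde T^n(Y)$ before applying Zorn. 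The remark about forward semiorbits being dense is not actually used in your final argument and can be dropped.
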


In the following simple example, we show a setup where Theorem \ref{BOU} does not hold even in the additive case and with any strong regularity involved.  

\begin{example}\label{IR} (Irrational rotations) Let $R_{\alpha}: \mathbb{T} \to \mathbb{T}$ be the irrational rotation on the torus, that is, $R_{\alpha}(x) = x + \alpha \bmod 1$ with $\alpha \in \R/\mathbb{Q}$. The map $R_{\alpha}$ is a uniquely ergodic (and also minimal) homeomorphism. In addition, let $f:\mathbb{T} \to \R$ be a continuous function which is not cohomologous to any constant. It follows in particular from Example \ref{UED} that
\[
\lim_{n \to \infty}\frac{1}{n}\bigg\|S_nf - n\int_{\mathbb{T}}f d\nu\bigg\|_{\infty} = 0,
\]
where $\nu$ is the Lebesgue measure, that is, the unique $R_{\alpha}$-invariant measure on $\mathbb{T}$.

On the other hand, Proposition \ref{GH55} readily implies that $\sup_{n \in \N}\|S_n f - n \int_{\mathbb{T}}f d\nu\|_{\infty} = \infty$, regardless of the regularity of $f$.  
\qed
\end{example}

Observe that Example \ref{IR} also demonstrate that \textbf{Question B} cannot be positively answered for all setups in general.  

Even though the Examples \ref{RR}, \ref{UED} and \ref{IR} are interesting from a pure theoretical point of view, they are not as attractive as examples involving setups with some kind of hyperbolicity, where we can find the most strong applications in thermodynamic formalism and dimension theory in general. We will address the hyperbolic and related setups in the next sections. 

\subsection{H\"older regularity of almost additive sequences}\label{BHR}

For the sake of simplicity, in this section we are considering only the case of left-sided full shifts of finite type, but most of the results can be properly obtained for two-sided full shifts of finite type, topologically mixing subshifts of finite type, repellers of topologically mixing $C^{1}$ maps and locally maximal hyperbolic sets for topologically mixing $C^{1}$ diffeomorphisms.  

We start with a simple but useful auxiliary result.

\begin{lemma}\label{aas}
Let $X$ be a topological space and $T: X \to X$ a map. Let $\cA:= (a_n)_{n \in \N}$ and $\cB:= (b_n)_{n \in \N}$ be two sequences of functions such that $\sup_{n \in \N}\|a_n - b_n\| < \infty$. Then, $\cA$ is almost additive with respect to $T$ if and only if $\cB$ is almost additive with respect to $T$.   
\end{lemma}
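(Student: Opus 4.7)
The statement is symmetric in $\cA$ and $\cB$, so it suffices to prove one direction: assume $\cA$ is almost additive and deduce that $\cB$ is. My plan is to perform a three-step sandwich: replace $\cB$ by $\cA$ (paying $D$), apply almost additivity of $\cA$ (paying $C_A$), then replace $\cA$ back by $\cB$ at two evaluation points (paying another $2D$).

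More precisely, let $C_A>0$ be an almost additivity constant for $\cA$, so that
\[
-C_A + a_m(x) + a_n(T^m(x)) \;\le\; a_{m+n}(x) \;\le\; a_m(x) + a_n(T^m(x)) + C_A
\]
for every $x\in X$ and $m,n\ge 1$, and let $D := \sup_{n\in\N}\|a_n - b_n\|_{\infty} < \infty$ be the hypothesis. For the upper bound, I would write
\[
b_{m+n}(x) \;\le\; a_{m+n}(x) + D \;\le\; a_m(x) + a_n(T^m(x)) + C_A + D \;\le\; b_m(x) + b_n(T^m(x)) + C_A + 3D,
\]
using $|a_k - b_k| \le D$ three times (once for index $m+n$, once for index $m$ at the point $x$, and once for index $n$ at the point $T^m(x)$). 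The lower bound is entirely analogous and yields
\[
b_{m+n}(x) \;\ge\; b_m(x) + b_n(T^m(x)) - (C_A + 3D).
\]
Hence $\cB$ is almost additive with constant $C_B := C_A + 3D$. Swapping the roles of $\cA$ and $\cB$ gives the converse (with the same constant, since $\|a_n-b_n\|_\infty = \|b_n-a_n\|_\infty$).

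There is no genuine obstacle here; the argument is a bookkeeping exercise in the triangle inequality, and the only point to be careful about is to apply the uniform bound $|a_k(y)-b_k(y)|\le D$ at the correct three evaluation points $(m+n,x)$, $(m,x)$ and $(n,T^m(x))$ so that the additive constant for $\cB$ is finite and independent of $x$, $m$, and $n$.
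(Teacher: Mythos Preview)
Your proof is correct and follows essentially the same approach as the paper: both apply the uniform bound $\|a_k-b_k\|_\infty\le D$ three times (at the indices $m+n$, $m$, and $n$) together with the almost additivity of $\cA$, arriving at the same constant $C_A+3D$ for $\cB$. The only cosmetic difference is that the paper introduces the notation $\xi_n=a_n-b_n$ and writes out the lower bound explicitly, whereas you do the upper bound explicitly; the arguments are otherwise identical.
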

\begin{proof}
Let $\xi_n:= a_n - b_n$ for all $n \in \N$. By hypotheses, there exists $L > 0$ such that $\sup_{n \in \N}\|\xi_n\|_{\infty} \le L$. Suppose $\cA$ is almost additive with respect to $T$ (and with constant $C > 0$). Then, for all $m,n \in \N$ we have
\[
\begin{split}
b_{n} + b_{m} \circ T^{n} - (3L + C) &= a_{n} - \xi_{n} + a_{m} \circ T^{n} - \xi_{m} \circ T^{n} - (3L + C)\\
&= a_{n} + a_{m} \circ T^{n} - C + (-\xi_{n} - L) + (-\xi_{m} \circ T^{n} - L) - L \\
&\le a_{n+m} - L \le a_{n+m} - \xi_{n+m} = b_{n+m}. 
\end{split}
\]
Analogously, we also have
\[
b_{n + m} \le b_{n} + b_{m} \circ T^{n} + (3L + C) \quad \textrm{for all $m, n \in \N$}.
\] 
Hence, the sequence $\cB$ is almost additive (with respect to $T$) and with constant $3L + C > 0$. The same argument works to prove the almost additivity of sequence $\cA$, and the result is proved.
\end{proof}

Let $\Sigma = \{1,\ldots,k\}$ be a finite alphabet and let $\Sigma^{\N}$ be the space of sequences $x = (i_1 i_2 \cdots)$ with $i_n \in \Sigma$ for every $n\in\N$. We define the \emph{left-sided full shift map} $\sigma\colon \Sigma^{\N} \to \Sigma^{\N}$ by $\sigma(i_1 i_2 i_3 \cdots) = (i_2 i_3 i_4 \cdots)$. Given a probability measure $\mu$ on $\Sigma^{\N}$, for any fixed $n$-tuple $(j_1,\ldots,j_n) \in \Sigma^n$ we write
\[\mu_n(C_{j_1 \ldots j_n}):= \mu(\{x = (i_1 i_2 \cdots) \in \Sigma^{\N}: i_1 = j_1,\ldots, i_n = j_n\}).
\]
A probability measure $\mu$ is said to be \emph{quasi-Bernoulli} if $\mu_1(C_{j}) > 0$ for every $j \in \Sigma$, and if there exists a constant $C \ge 1$ such that
\[
C^{-1}\le \frac{\mu_{n+m}(C_{j_1\cdots j_{n+m}})}{\mu_n(C_{j_1\cdots j_n})\mu_m(C_{j_{n+1}\cdots j_{n+m}})}\le C
\]
for every $m, n \ge 1$ and every $j_1,\ldots,j_{n+m} \in \Sigma$.  When one can take $C =1$, the measure $\mu$ is said to be \emph{Bernoulli}.  

Given $x = (i_1 i_2...) \in \Sigma^{\N}$ and $n \ge 1$, consider the set
\[
C_{i_1 \ldots i_n}(x):= \{y = (j_1 j_2 \cdots) \in \Sigma^{\N}: j_1 = i_1,\ldots, j_n = i_n\}.
\] 
Now given an arbitrary measure $\nu$ on $\Sigma^{\N}$, consider the sequence of continuous functions $\cF:= (f_n)_{n \in \N}$ given by $f_n(x):= \log\nu(C_{i_1 \ldots i_n}(x))$ for all $n \in \N$ and every $x \in \Sigma^{\N}$. It is easy to see that if $\nu$ is quasi-Bernoulli then $\cF$ is almost additive. In this case, we say that the sequence $\cF$ is generated by the quasi-Bernoulli measure $\nu$. 

Let $Bow(\Sigma^{\N},\sigma)$ be the vector space of continuous functions satisfying the Bowen property with respect to the shift $\sigma: \Sigma^{\N} \to \Sigma^{\N}$. 

Now lets us show how we can apply Theorem $\ref{BOU}$ to finally give a negative answer to \textbf{Question C}:

\begin{theorem}\label{NHD}
Let $\sigma: \Sigma^{\N} \to \Sigma^{\N}$ be the left-sided full shift. Then
	
\begin{itemize}
		
\item there exists an almost additive sequence of continuous functions generated by a quasi-Bernoulli measure on $\Sigma^{\N}$ and which is not physically equivalent to any H\"older continuous function.
\item there exist almost additive sequences of H\"older continuous functions with bounded variation and which are not physically equivalent to any additive sequence generated by a H\"older continuous function.
\end{itemize}
\end{theorem}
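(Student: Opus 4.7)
The plan is to use the main result Theorem \ref{BOU} as an obstruction mechanism and plug in a planar $GL(2,\R)$-cocycle construction from \cite{BKM}. I would begin by taking a locally constant dominated cocycle $\cA:\Sigma^{\N}\times\N \to GL(2,\R)$ from the BKM example, so that the associated sequence $\cF_A = (a_n)_{n\in\N}$ with $a_n(x) = \log\|\cA(x,n)\|$ is almost additive (by domination, as discussed in Section \ref{Liv-sec}), has bounded variation (locally constant cocycles trivially have bounded distortion), and each $a_n$ is locally constant, hence H\"older continuous.

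For item (2) I would argue by contradiction. Assume there is a H\"older continuous $f:\Sigma^{\N}\to\R$ with $\lim_{n\to\infty} \|a_n - S_n f\|_{\infty}/n = 0$. H\"older continuity gives the Bowen property for $f$, so $(S_n f)_{n\in\N}$ has bounded variation, and therefore $\cG := (a_n - S_n f)_{n\in\N}$ is almost additive (the difference of an almost additive and an additive sequence) with bounded variation. Theorem \ref{BOU} then upgrades the sublinear hypothesis to $\sup_{n\in\N}\|a_n - S_n f\|_{\infty} < \infty$, so in particular
\[
\bigl|\log\|\cA(p,n)\| - S_n f(p)\bigr| \le K
\]
for all $\sigma$-periodic $p$ of period $n$ and a uniform $K>0$. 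The role of \cite{BKM} is to furnish a cocycle whose periodic singular-value data cannot be matched in a uniformly bounded way by any H\"older Birkhoff sum, yielding the required contradiction.

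For item (1), I would convert $\cA$ into a quasi-Bernoulli measure by a standard matrix-valued construction: fix a strictly positive (or cone-invariant) vector $v$ and define a measure $\nu$ on $\Sigma^{\N}$ satisfying $\nu(C_{i_1\cdots i_n}) \asymp \|A_{i_1}\cdots A_{i_n}\,v\|$ after normalization. Domination (together with the cone invariance) provides the two-sided comparability that makes $\nu$ quasi-Bernoulli, and the associated sequence $f_n(x) = \log\nu(C_{i_1\cdots i_n}(x))$ differs from $a_n(x) = \log\|\cA(x,n)\|$ by a uniformly bounded amount. Thus physical equivalence of $(f_n)_{n\in\N}$ to a H\"older Birkhoff sum would transfer to $\cF_A$ and be ruled out by the same Theorem \ref{BOU} argument as above.

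The main obstacle is the translation step: extracting from \cite{BKM} the precise statement that no H\"older function admits a uniformly bounded periodic approximation to $(\log\|\cA(\cdot,n)\|)_{n\in\N}$. The original BKM example is framed in the language of matrix equilibrium states and singular-value functions, and one needs to reinterpret its conclusion as a periodic-orbit obstruction. Once this translation is in place, Theorem \ref{BOU} does the remaining work by promoting the weak (sublinear) physical equivalence into a strong (uniformly bounded) control on periodic orbits and then contradicting the BKM construction.
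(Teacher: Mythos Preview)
Your approach is essentially correct and close in spirit to the paper's, but you have inverted the order and made the ``translation step'' harder than it needs to be. The paper starts from item~(1): it takes directly from \cite{BKM} (Example~2.10) a quasi-Bernoulli measure $\nu$ that is \emph{not Gibbs with respect to any H\"older potential}, sets $f_n(x)=\log\nu(C_{i_1\ldots i_n}(x))$, and observes that $\sup_n\|f_n-S_nf\|_\infty<\infty$ for some H\"older $f$ would make $\nu$ Gibbs for $f$ (since $P_\sigma(\cF)=0$), a contradiction. Corollary~\ref{EQ} then rules out physical equivalence. Item~(2) is obtained from item~(1) by approximating each $f_n$ by a H\"older $g_n$ with $\|f_n-g_n\|_\infty\le 1$ and invoking Lemma~\ref{aas}.

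Your route starts from the cocycle norms $a_n=\log\|\cA(\cdot,n)\|$, which has the pleasant feature that the $a_n$ are already locally constant and no density argument is needed for item~(2). But the obstruction you need from \cite{BKM} is \emph{not} a periodic-orbit statement; you should not try to extract one. What \cite{BKM} gives is exactly that the equilibrium (quasi-Bernoulli) measure $\nu$ for $(a_n)$ is not Gibbs for any H\"older potential. Since $\nu$ \emph{is} Gibbs for the almost additive sequence $(a_n)$, one has $|\log\nu(C_{i_1\ldots i_n}(x)) - a_n(x) + nP|\le\text{const}$, so a uniform bound $\sup_n\|a_n-S_nf\|_\infty<\infty$ for H\"older $f$ would force $\nu$ to be Gibbs for the H\"older function $f-P$, contradicting \cite{BKM}. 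That is the whole translation; Theorem~\ref{BOU} (via $(1)\Leftrightarrow(2)$, i.e.\ Corollary~\ref{EQ}) then finishes. Your item~(1) construction is unnecessary: the quasi-Bernoulli measure is already supplied by \cite{BKM}, and the sequence it generates differs from $(a_n-nP)$ by a uniformly bounded amount, so both items follow from the same contradiction.
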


\begin{proof}
Let $\eta$ be a quasi-Bernoulli measure on $\Sigma^{\N}$ and consider the sequence $\cH = (h_n)_{n \in \N}$ generated by the measure $\eta$. Suppose there exist a constant $K > 0$ and a function $h \in Hol(\Sigma^{\N})$ such that $\|h_n - S_nh\|_{\infty} \le K < \infty$ for all $n\in\N$. This readily gives that 
\[
e^{-K}e^{S_nh(x)}\le \eta(C_{j_1 \cdots j_n}(x)) \le e^{S_nh(x)}e^{K}
\]
for all $x \in \Sigma^{\N}$ and $n\in\N$. Since $P_{\sigma}(h) = P_{\sigma}(\cH) = 0$ (see Theorem~2.1 in \cite{IY17}), we conclude that $\eta$ is a Gibbs measure with respect to the H\"older continuous function $h$.

Example 2.10 in \cite{BKM} shows a quasi-Bernoulli measure $\nu$ on $\Sigma^{\N}$ which is not Gibbs with respect to any H\"older continuous function. Now let $\cF = (f_n)_{n \in \N}$ be the sequence generated by $\nu$, that is, $f_n(x):= \log \nu (C_{i_1 \ldots i_n}(x))$ for all $x \in \Sigma^{\N}$ and $n \in \N$. This implies immediately that there is no $f \in Hol(\Sigma^{\N})$ such that 
\[
\sup_{n \in\N}\|f_n - S_nf\|_{\infty} < \infty,
\]
otherwise $\nu$ would be Gibbs with respect to the H\"older function $f$. Let $Hol(\Sigma^{\N})$ be the space of H\"older continuous functions. Since $Hol(\Sigma^{\N}) \subset  Bow(\Sigma^{\N},\sigma)$ and $\cF$ has bounded variation, Corollary \ref{EQ} guarantees that there is no H\"older continuous function $f$ such that 
\begin{equation}\label{HOL}
\lim_{n \to \infty}\frac{1}{n}\|f_n - S_nf\|_{\infty} = 0. 
\end{equation}
	
Moreover, since $Hol(\Sigma^{\N})$ is dense in the space of continuous functions on the compact space $\Sigma^{\N}$ (with respect to the sup norm $\|\cdot\|_{\infty}$), for each $n \ge 1$ there exists a H\"older continuous function $g_n: \Sigma^{\N} \to \R$ such that $\|f_n - g_n\|_{\infty} \le 1$. By Lemma \ref{aas}, the sequence $\cG := (g_n)_{n \in \N}$ is almost additive. It is also clear that $\cG$ has bounded variation. Moreover, for every continuous function $g: \Sigma^{\N} \to \R$ we have
\begin{equation}\label{LLV}
\lim_{n \to \infty}\frac{1}{n}\|f_n - S_ng\|_{\infty} = 0 \quad \textrm{if and only if} \quad \lim_{n \to \infty}\frac{1}{n}\|g_n - S_ng\|_{\infty}= 0.
\end{equation}
So, if there exists a H\"older continuous function $f$ such that 
\[
\lim_{n \to \infty}\frac{1}{n}\|g_n - S_nf\|_{\infty} = 0,
\]
it follows directly from \eqref{LLV} that the H\"older continuous function $f$ also satisfies \eqref{HOL}, which is a contradiction. Therefore, there is no additive sequence generated by a H\"older continuous function which is physically equivalent to the sequence $\cG$. 

Notice that by this construction, for each $\alpha > 0$ one can find an almost additive sequence of H\"older continuous functions $\cG_{\alpha}:= (g^{\alpha}_n)_{n \in \N}$ with $\|g^{\alpha}_n - f_n\|_{\infty} \le \alpha$ for all $n \in \N$, and not physically equivalent to any additive sequence generated by a H\"older continuous function, as desired.
\end{proof}

Theorem \ref{NHD} gives a negative answer to the problem raised in \cite{LLV22} (see Remark 3.4 in it). It is important emphasizing that these counter-examples of Theorem \ref{NHD} also demonstrate that, regarding the thermodynamic formalism, multifractal analysis and ergodic optimization, we are not always able to simplify or reduce the study of almost additive sequences with bounded variation to the additive case with H\"older (or Lipschitz) potentials, which are the most natural and most studied class of functions in the classical setup. In other words, the physical equivalence proved in \cite{Cun20} does not pass H\"older regularity in general. On the other hand, regarding more general regularity aspects, every almost additive sequence with bounded variation still could be physically equivalent to some additive sequence generated by a Bowen function.

\subsection{Bowen regularity of almost and asymptotically additive sequences}

In this section, based on physical equivalence associations, we study how one can relate more general regularity aspects of almost and asymptotically additive sequences to the regularity of single continuous functions. As we shall see, measures satisfying the Gibbs property and quasi-Bernoulli measures play an important role in our approach towards a more general treatment of regularity for sequences.

A probability measure $\mu$ on $\Sigma^{\N}$ is said to be \emph{Gibbs} with respect to a function $\phi: \Sigma^{\N} \to \R$ when there exists a constant $K \ge 1$ such that
\begin{equation}\label{GIB}
K^{-1} \le \frac{\mu_n(C_{j_1 \ldots j_n})}{\exp(-nP_{\sigma}(\phi) + S_n\phi(x))} \le K
\end{equation}

for all $x \in C_{j_1 \ldots j_n}$ and $n \ge 1$, where $P_{\sigma}(\phi)$ is the classical topological pressure of $\phi$ with respect to the shift $\sigma: \Sigma^{\N} \to \Sigma^{\N}$.

We note that every Gibbs measure with respect to a function is quasi-Bernoulli. In fact, let $\mu$ be a Gibbs measure with respect to a function $\phi$. By the definition, we have
\[
\frac{K^{-1}\exp[-(n+m)P_{\sigma}(\phi) + S_{n+m}\phi(x)]}{K^{2}\exp(-nP_{\sigma}(\phi) + S_n\phi(x))\exp(-mP_{\sigma}(\phi) + S_m\phi(\sigma^{n}(x)))} \le \frac{\mu_{n+m}(C_{j_1\cdots j_{n+m}})}{\mu_n(C_{j_1\cdots j_n})\mu_m(C_{j_{n+1}\cdots j_{n+m}})},
\]
\[
\frac{\mu_{n+m}(C_{j_1\cdots j_{n+m}})}{\mu_n(C_{j_1\cdots j_n})\mu_m(C_{j_{n+1}\cdots j_{n+m}})} \le \frac{K\exp[-(n+m)P_{\sigma}(\phi) + S_{n+m}\phi(x)]}{K^{-2}\exp(-nP_{\sigma}(\phi) + S_n\phi(x))\exp(-mP_{\sigma}(\phi) + S_m\phi(\sigma^{n}(x)))}.
\]
Since $S_{n+m}\phi = S_{n}\phi + S_{m}\phi \circ \sigma^{n}$, we finally obtain that 
\[
K^{-3} \le \frac{\mu_{n+m}(C_{j_1\cdots j_{n+m}})}{\mu_n(C_{j_1\cdots j_n})\mu_m(C_{j_{n+1}\cdots j_{n+m}})} \le K^{3} := \widetilde{K},
\]
as desired. This result can be extended to almost additive sequences:

\begin{proposition}\label{GQB}
Every Gibbs measure with respect to an almost additive sequence is quasi-Bernoulli.
\end{proposition}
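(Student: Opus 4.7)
The plan is to mimic the argument given just before the statement (for the case of Gibbs measures with respect to a single function), but where the exact additive identity $S_{n+m}\phi = S_n\phi + S_m\phi\circ\sigma^n$ gets replaced by its almost additive counterpart. Concretely, let $\mu$ be a Gibbs measure with respect to an almost additive sequence $\cF=(f_n)_{n\in\N}$ on $\Sigma^{\N}$, with Gibbs constant $K\geq 1$, pressure $P := P_{\sigma}(\cF)$, and almost additivity constant $C>0$. Given a cylinder $C_{j_1\cdots j_{n+m}}$, pick any point $x$ in it; then automatically $x\in C_{j_1\cdots j_n}$ and $\sigma^n(x)\in C_{j_{n+1}\cdots j_{n+m}}$, so the same $x$ can be used to bound all three cylinder measures via the Gibbs property \eqref{NAG}.

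Applying \eqref{NAG} once to the numerator and twice to the denominator, and noting that the pressure contributions telescope as $-(n+m)P - (-nP) - (-mP) = 0$, the ratio
\[
\frac{\mu_{n+m}(C_{j_1\cdots j_{n+m}})}{\mu_n(C_{j_1\cdots j_n})\,\mu_m(C_{j_{n+1}\cdots j_{n+m}})}
\]
is sandwiched between $K^{-3}\exp\bigl(f_{n+m}(x) - f_n(x) - f_m(\sigma^n(x))\bigr)$ and $K^{3}\exp\bigl(f_{n+m}(x) - f_n(x) - f_m(\sigma^n(x))\bigr)$. The almost additivity of $\cF$ then gives $\bigl|f_{n+m}(x) - f_n(x) - f_m(\sigma^n(x))\bigr|\leq C$, so that the exponential factor is controlled uniformly by $e^{\pm C}$, and therefore the whole ratio lies in the interval $[K^{-3}e^{-C},\,K^{3}e^{C}]$, independently of $m,n$ and the choice of symbols. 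Setting $\widetilde K := K^{3}e^{C}$ yields the quasi-Bernoulli inequality.

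It remains to verify the positivity condition $\mu_1(C_j)>0$ for every $j\in\Sigma$ required by the definition of quasi-Bernoulli. This is immediate: the Gibbs lower bound with $n=1$ gives $\mu_1(C_j)\geq K^{-1}\exp(-P+f_1(x))$ for any $x\in C_j$, and since $f_1$ is continuous on the compact space $\Sigma^{\N}$, it is bounded, so $\mu_1(C_j)>0$. No real obstacle arises in this proof; the only mild subtlety is making sure the point $x$ can be taken the same in all three Gibbs inequalities, which is trivial once one observes the nesting of the relevant cylinders along the orbit of $x$ under $\sigma^n$.
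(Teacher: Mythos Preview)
Your proof is correct and follows essentially the same route as the paper: apply the Gibbs bound \eqref{NAG} three times, cancel the pressure terms, and absorb the defect $f_{n+m}(x)-f_n(x)-f_m(\sigma^n(x))$ via the almost additivity constant $C$, arriving at the same constant $K^3 e^C$. You are in fact slightly more thorough than the paper, since you also check the positivity $\mu_1(C_j)>0$ and make explicit why a single point $x$ serves for all three cylinders.
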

\begin{proof}
Let $\cF = (f_n)_{n \in \N}$ be an almost additive sequence with Gibbs measure $\mu$ with uniform constant $K > 0$ (recall the definition in \eqref{NAG}). By almost additivity, there exists $C > 0$ such that $\|f_{n+m} - f_n - f_{m}\circ \sigma^{n}\|_{\infty} \le C$ for all $m,n \in \N$. Proceeding as in the additive case above, one can check that 
\[
(K^{3}e^{C})^{-1} \le \frac{\mu_{n+m}(C_{j_1\cdots j_{n+m}})}{\mu_n(C_{j_1\cdots j_n})\mu_m(C_{j_{n+1}\cdots j_{n+m}})} \le K^{3}e^{C}.
\]
Hence, $\mu$ is quasi-Bernoulli, as desired. 
\end{proof}

\begin{proposition}\label{CCH}
Let $\cF$ be a sequence of continuous functions with an equilibrium measure $\mu \in \cM_\sigma$ satisfying the Gibbs property with respect to $\cF$. Then, $\mu$ is quasi-Bernoulli if and only if $\cF$ is almost additive. 
\end{proposition}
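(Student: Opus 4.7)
The plan is to prove the two implications separately. The reverse direction (almost additivity implies that any Gibbs measure is quasi-Bernoulli) is exactly the content of Proposition \ref{GQB}, so nothing further is required there. The forward direction is the substantive part: assuming $\mu$ is quasi-Bernoulli and satisfies the Gibbs property with respect to $\cF$, I need to extract an almost additive estimate on the sequence $\cF$.

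The key observation is that both hypotheses are ``multiplicative estimates up to bounded constants,'' and they can be combined on the same family of cylinders to produce a bound purely in terms of $f_n$. Concretely, fix $x = (i_1 i_2 \cdots) \in \Sigma^{\N}$ and integers $m,n \ge 1$, and consider the three cylinders $C_{i_1 \cdots i_{n+m}}$, $C_{i_1 \cdots i_n}$ and $C_{i_{n+1} \cdots i_{n+m}}$. Since $x$ lies in the first two and $\sigma^n(x)$ lies in the third, the Gibbs property with constant $K \ge 1$ gives
\[
K^{-1}e^{-(n+m)P_{\sigma}(\cF)+f_{n+m}(x)} \le \mu(C_{i_1 \cdots i_{n+m}}) \le K e^{-(n+m)P_{\sigma}(\cF)+f_{n+m}(x)},
\]
and analogous two-sided estimates for $\mu(C_{i_1 \cdots i_n})$ in terms of $f_n(x)$ and for $\mu(C_{i_{n+1}\cdots i_{n+m}})$ in terms of $f_m(\sigma^n(x))$.

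Next I would feed these three estimates into the quasi-Bernoulli bound
\[
C^{-1} \le \frac{\mu(C_{i_1 \cdots i_{n+m}})}{\mu(C_{i_1 \cdots i_n})\,\mu(C_{i_{n+1}\cdots i_{n+m}})} \le C,
\]
so the exponentials involving the topological pressure cancel and the remaining ratio becomes $e^{f_{n+m}(x)-f_n(x)-f_m(\sigma^n(x))}$, modulated by the constants $K^{\pm 3}$. Taking logarithms, this yields
\[
|f_{n+m}(x)-f_n(x)-f_m(\sigma^n(x))| \le \log C + 3\log K
\]
uniformly in $x$, $m$ and $n$, which is precisely the almost additivity of $\cF$ with constant $\log C + 3\log K$.

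I do not expect a real obstacle here; the only care needed is to evaluate the Gibbs estimate at the correct base point (namely $\sigma^n(x)$ for the middle block) so that the cancellation $S$-style relation $f_{n+m}(x)-f_n(x)-f_m(\sigma^n(x))$ appears cleanly. For completeness I would also remark that the argument is written for the left-sided full shift but, as in the rest of Section \ref{BHR}, extends verbatim to topologically mixing subshifts of finite type and other systems where the Gibbs property is formulated via cylinders or Bowen balls.
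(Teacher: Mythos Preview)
Your proposal is correct and follows essentially the same route as the paper. The paper's one-line proof for the forward direction (``use the Gibbs property together with the fact that every sequence generated by a quasi-Bernoulli measure is almost additive'') is exactly the computation you spell out: the almost additivity of $(\log\mu(C_{i_1\cdots i_n}(x)))_n$ is the quasi-Bernoulli bound in logarithmic form, and the Gibbs inequality transfers it to $\cF$ with the extra $3\log K$ you obtain.
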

\begin{proof}
By Proposition \ref{GQB} if $\mu$ is Gibbs with respect to $\cF$ then it is quasi-Bernoulli. For the converse just use the Gibbs property together with the fact that every sequence generated by a quasi-Bernoulli measure is almost additive. 	
\end{proof}

\begin{remark}
Proposition \ref{CCH} is, in particular, related to Theorem 2.8 in $\cite{BKM}$ for planar matrix cocycles in general. 
\end{remark}
The following result is a characterization of almost additive sequences for which we can answer affirmatively the \textbf{Question A} and the \textbf{Question B}.

\begin{theorem}\label{BKM-H}
Let $\cF = (f_n)_{n \in \N}$ be an almost additive sequence of continuous functions with respect to the left-sided full shift $\sigma: \Sigma^{\N} \to \Sigma^{\N}$ and satisfying the bounded variation property. Then, the following statements are equivalent:
\begin{enumerate}
\item the equilibrium measure of $\cF$ is Gibbs with respect to some continuous Bowen function;

\item there exists a continuous Bowen function $f: \Sigma^{\N} \to \R$ such that
\[
\lim_{n \to \infty}\frac{1}{n}\|f_n - S_nf\|_{\infty} = 0.
\]

\item there exists a continuous Bowen function $f: \Sigma^{\N} \to \R$ such that
\[
\sup_{n \in \N}\|f_n - S_nf\|_{\infty} < \infty.
\]
\end{enumerate} 
\end{theorem}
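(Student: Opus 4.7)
The plan is to close the equivalence via the cycle (3) $\Rightarrow$ (2) $\Rightarrow$ (3) together with (3) $\Leftrightarrow$ (1). The implication (3) $\Rightarrow$ (2) is immediate, since a uniform bound on $\|f_n - S_nf\|_\infty$ forces $\|f_n - S_nf\|_\infty/n \to 0$. For (2) $\Rightarrow$ (3), I would apply Corollary \ref{EQ} directly: the Bowen property of $f$ implies that $(S_nf)_{n \in \N}$ has bounded variation (as noted in the preamble of Theorem \ref{BOU}), so the corollary converts the averaged limit into the uniform bound.

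For (3) $\Rightarrow$ (1), I would invoke Proposition \ref{BAR} (together with the remark extending it to full shifts of finite type) to obtain a unique equilibrium measure $\mu$ of $\cF$ which satisfies the Gibbs property with respect to $\cF$:
\[
K^{-1} \le \frac{\mu_n(C_{j_1\cdots j_n})}{\exp(-nP_\sigma(\cF) + f_n(x))} \le K.
\]
Combined with the uniform bound $|f_n(x) - S_nf(x)| \le L$ coming from (3), this yields $K^{-1}e^{-L} \le \mu_n(C_{j_1\cdots j_n})/\exp(-nP_\sigma(\cF) + S_nf(x)) \le Ke^{L}$. It remains to identify $P_\sigma(\cF)$ with $P_\sigma(f)$; this follows from the nonadditive variational principle together with Lebesgue dominated convergence, since the physical equivalence in (2) (automatic from (3)) gives $\lim_n \frac{1}{n}\int f_n\, d\nu = \int f\, d\nu$ for every $\sigma$-invariant $\nu$, and thus the two variational principles coincide. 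Substituting, one gets the Gibbs property of $\mu$ with respect to the Bowen function $f$.

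For (1) $\Rightarrow$ (3), suppose $\mu$ is Gibbs with respect to a Bowen function $g$ with constant $K_1$, and by Proposition \ref{BAR}, also Gibbs with respect to $\cF$ with some constant $K_2$. Taking the ratio of the two double inequalities pointwise (choosing for each $x$ the unique cylinder of length $n$ containing it) and taking logarithms yields
\[
|f_n(x) - S_ng(x) - n(P_\sigma(\cF) - P_\sigma(g))| \le \log(K_1 K_2)
\]
for all $x \in \Sigma^\N$ and $n \in \N$. Setting $\tilde f := g + (P_\sigma(\cF) - P_\sigma(g))$, which remains a Bowen function since it differs from $g$ only by a constant, one obtains $\sup_n \|f_n - S_n\tilde f\|_\infty \le \log(K_1 K_2) < \infty$, establishing (3).

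The main obstacle I anticipate is the pressure identity $P_\sigma(\cF) = P_\sigma(f)$ in (3) $\Rightarrow$ (1), which is what allows the Gibbs inequality with respect to $\cF$ to transfer to one with respect to the single Bowen function $f$; fortunately, this follows cleanly once the nonadditive variational principle is invoked. Everything else is a routine manipulation of the Gibbs inequalities once Proposition \ref{BAR} and Corollary \ref{EQ} are in place, so the proof reduces essentially to the interplay between these two tools and the Bowen regularity, which is preserved under addition of constants.
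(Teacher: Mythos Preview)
Your proof is correct and follows essentially the same approach as the paper: (2) $\Leftrightarrow$ (3) via Corollary \ref{EQ}, and (1) $\Leftrightarrow$ (3) by comparing the two Gibbs inequalities supplied by Proposition \ref{BAR} (equivalently Mummert's Theorem 6). Your explicit treatment of the pressure difference in (1) $\Rightarrow$ (3)---absorbing it into the constant shift $\tilde f = g + (P_\sigma(\cF)-P_\sigma(g))$---is in fact slightly more careful than the paper, which simply writes both Gibbs bounds with the same pressure $P_\sigma(\cF)$ without further comment.
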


\begin{proof}
Let us start proving that 1 implies 3. Since $\cF$ has bounded variation, it follows by Theorem 6 in \cite{Mum06} (see also Theorem 5 in \cite{Bar06}) that $\cF$ has a unique equilibrium measure $\eta$, which is also Gibbs with respect to $\cF$. By assumption, $\eta$ is also Gibbs with respect to some continuous function $f$. Then, there exist constants $K_1 \ge 1$ and $K_2 \ge 1$ such that 
\begin{equation}\label{GBB}
K_1^{-1} \le \frac{\eta(C_{i_1 \ldots i_n}(x))}{\exp[-nP_{\sigma}(\cF) + f_n(x)]}\le K_1 \quad \textrm{and} \quad K_2^{-1} \le \frac{\eta(C_{i_1 \ldots i_n}(x))}{\exp[-nP_{\sigma}(\cF) + S_nf(x)]}\le K_2
\end{equation}
for all $x \in \Sigma^{\N}$ and $n \in \N$. This readily implies that $|f_n(x) - S_nf(x)| \le \log{K_1K_2}$ for all $x \in X$ and $n \in \N$, which yields 3. 

Conversely, suppose 3 holds. Then, there exists a constant $K_3 > 0$ and a continuous function $f \in Bow(\Sigma^{\N},\sigma)$ such that $\|f_n - S_nf\|_{\infty} \le K_3$ for all $n \in \N$. Hence, by the Gibbs property with respect to $\cF$ in \eqref{GBB}, we get that
\[
(K_1e^{K_3})^{-1}= K_1^{-1}e^{-K_3} \le \frac{\eta(C_{i_1 \ldots i_n}(x))}{\exp[-nP_{\sigma}(\cF) + S_nf(x)]}\le K_1e^{K_3}
\]
for all $x \in \Sigma^{\N}$ and $n \in \N$, as desired. Corollary \ref{EQ} immediately gives that items 2 and 3 are equivalent, and the result is proved. 
\end{proof}

\begin{remark}
We note that Theorem \ref{BKM-H} is related to the characterization result obtained in Theorem 2.9 in \cite{BKM} for planar matrix cocycles, but now also including Bowen functions in general and requiring only physical equivalence instead of the stronger hypotheses of uniformly bounded sequences.
\end{remark} 

We also know exactly what are the functions and almost additive sequences admitting measures satisfying the Gibbs property: 

\begin{proposition}\label{GBS}
Let $\sigma: \Sigma^{\N}$ be the left-sided full shift. Then
	
\begin{itemize}
\item $\phi \in Bow(\Sigma^{\N},\sigma)$ if and only if there exists a Gibbs measure with respect to $\phi$. 
		
\item  an almost additive sequence $\cF$ has bounded variation if and only if there exists a Gibbs measure with respect to $\cF$. 
\end{itemize} 
	
\end{proposition}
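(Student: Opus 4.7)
The plan is to handle each equivalence by invoking existing uniqueness-and-existence results for the forward direction and by deriving the regularity property directly from the Gibbs inequalities for the converse.

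For the first statement, suppose $\phi \in Bow(\Sigma^{\N},\sigma)$. By the classical thermodynamic formalism for full shifts with Bowen potentials (see Bowen \cite{Bow75}, Walters \cite{Wal78}), $\phi$ admits a unique equilibrium measure that satisfies the Gibbs property in the sense of \eqref{GIB}, giving the desired Gibbs measure. For the converse, suppose $\mu$ is a Gibbs measure with respect to $\phi$, with constant $K \ge 1$. Given any cylinder $C_{j_1 \ldots j_n}$ and any two points $x, y \in C_{j_1 \ldots j_n}$, applying \eqref{GIB} to both $x$ and $y$ and taking the ratio yields
\[
K^{-2} \le \exp\bigl(S_n\phi(x) - S_n\phi(y)\bigr) \le K^{2},
\]
so $|S_n\phi(x) - S_n\phi(y)| \le 2\log K$ whenever $x, y$ lie in the same $n$-cylinder. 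Since on $\Sigma^{\N}$ being in the same $n$-cylinder is precisely the condition $d_n(x,y) < \epsilon$ for a suitably chosen $\epsilon > 0$ depending on the metric, this shows that $\phi$ satisfies the Bowen property.

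For the second statement, the forward direction follows from Proposition~\ref{BAR} (and its extension to full shifts of finite type, see Theorem~5 in \cite{Bar06} and Theorem~6 in \cite{Mum06}): an almost additive sequence $\cF$ with bounded variation admits a unique equilibrium measure which additionally satisfies the Gibbs property with respect to $\cF$ in the sense of \eqref{NAG}. Conversely, assume there exists a Gibbs measure $\mu$ with respect to $\cF = (f_n)_{n \in \N}$ with constant $K \ge 1$. For any cylinder $C_{j_1 \ldots j_n}$ and any $x, y \in C_{j_1 \ldots j_n}$, taking the ratio of the two Gibbs estimates yields
\[
K^{-2} \le \exp\bigl(f_n(x) - f_n(y)\bigr) \le K^{2},
\]
and therefore $|f_n(x) - f_n(y)| \le 2\log K$. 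Taking the supremum over $n \in \N$ and over $x, y$ in a common $n$-cylinder (equivalently, with $d_n(x,y)$ smaller than the diameter of $1$-cylinders) shows that $\cF$ has bounded variation.

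There is no real obstacle here; the only mild subtlety is the translation between "belonging to the same $n$-cylinder" and the dynamical condition $d_n(x,y) < \epsilon$ that appears in the definition of the Bowen property and of bounded variation, but this is immediate from the standard product metric on $\Sigma^{\N}$ by choosing $\epsilon$ smaller than the distance between distinct $1$-cylinders. The whole proof is essentially a direct reading of the Gibbs inequalities, combined with the known existence results for equilibrium measures in the Bowen and almost additive settings.
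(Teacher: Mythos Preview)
The proposal is correct and follows essentially the same approach as the paper: both directions in each bullet are handled exactly as in the paper---existence of Gibbs measures by citing the classical (Bowen/Walters) and nonadditive (Barreira/Mummert) uniqueness results, and the converse by dividing the two Gibbs inequalities at points $x,y$ in a common $n$-cylinder to obtain the uniform bound $2\log K$. Your added remark about translating ``same $n$-cylinder'' into the condition $d_n(x,y)<\epsilon$ is a welcome clarification that the paper leaves implicit.
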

\begin{proof}
Let $\mu$ be a Gibbs measure with respect to a function $\phi$. By definition, there exists $K \ge 1$ such that
\[
K^{-2} \le \exp(S_n\phi(x) - S_n\phi(y)) \le K^{2}
\]
for all $x, y \in C_{i_1 \ldots i_n}$ and all $n \ge 1$. This implies that
\[
\sup_{n \in \N}\sup\{|S_n\phi(x) - S_n\phi(y)|: x, y \in C_{i_1 \ldots i_n}\} \le 2\log K  < \infty,
\]
and $\phi \in Bow(\Sigma^{\N}, \sigma)$. On the other hand, when $\phi \in Bow(\Sigma^{\N},\sigma)$ there exists an invariant measure which is the unique equilibrium state for $\phi$. Moreover, this measure is also Gibbs with respect to $\phi$ (see the classical works \cite{Bow75a} and \cite{Bow75}). 
	
For the nonadditive part, Theorem 6 in \cite{Mum06} (or Theorem 5 \cite{Bar06}) guarantees the existence of unique equilibrium measures satisfying the Gibbs property for almost additive sequences with bounded variation. The converse follows exactly as in the additive case. 
\end{proof}

Now let $T:X \to X$ be a continuous map, where $X$ is a compact metric space. Recall that a continuous function $f$ is said to be a coboundary with respect to a map $T: X \to X$ when there exists a continuous function $q$ such that $f(x) = q(T(x)) - q(x)$ for all $x \in X$. Following \cite{BJ02} and \cite{Bou21}, we say that a continuous function $f: X \to \R$ is a \emph{weak coboundary} if it is the uniform limit of coboudaries. It is well known that $f$ is a weak coboundary if and only if $\int_X f d\mu = 0$ for every $T$-invariant measure $\mu$ (see for example \cite{BJ02}, \cite{KR01}, and \cite{Kri71}). Based on this and on Lemma \ref{UBA}, $f$ is also said to be weak coboundary if and only if the sequence $(S_nf/n)_{n \in \N}$ converges uniformly to zero. 

We denote $Wcob(X,T)$ the vector space of weak coboundaries and $Cob(X,T)$ the vector space of coboundaries with respect to the map $T: X \to X$. In fact, one can check that every coboundary is a Walters function. On the other hand, weak coboundaries form a space that is strictly larger than the space of coboundaries in general (see for example \cite{BJ02} and \cite{Koc13}).

Given two subspaces  $U$ and $V$ in general, we define the new \emph{sum} subspace $U + V$ in the natural way, that is, $U + V = \{u + v: u \in U \textrm{ and } v \in V\}$. When we have a \emph{direct sum}, we shall write $U\oplus V$.

\begin{proposition}\label{QB-G}
Let $\sigma: \Sigma^{\N} \to \Sigma^{\N}$ be the left-sided full shift. For each continuous function $\varphi \in Wcob (\Sigma^{\N},\sigma) + Bow(\Sigma^{\N},\sigma)$ there exists an almost additive sequence of continuous functions $\cF:= (f_n)_{n \in \N}$ satisfying the Walters property and such that 
\[
\lim_{n \to \infty}\frac{1}{n}\|f_n - S_n\varphi\|_{\infty} = 0.
\] 
In particular, if $\varphi \in Bow(\Sigma^{\N},\sigma)$ then
\[
\sup_{n\in \N}\|f_n - S_n\varphi\|_{\infty} < \infty. 
\]
\end{proposition}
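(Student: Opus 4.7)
The plan is to decompose $\varphi = \psi + \phi$ with $\psi \in Wcob(\Sigma^{\N},\sigma)$ and $\phi \in Bow(\Sigma^{\N},\sigma)$, and to construct $\cF$ using only the Bowen component $\phi$. Since
\[
\frac{1}{n}\|f_n - S_n\varphi\|_{\infty} \le \frac{1}{n}\|f_n - S_n\phi\|_{\infty} + \frac{1}{n}\|S_n\psi\|_{\infty},
\]
and the weak coboundary property gives $\|S_n\psi\|_{\infty}/n \to 0$, it suffices to produce an almost additive sequence $(f_n)_{n \in \N}$ of continuous functions satisfying the Walters property with $\sup_{n \in \N}\|f_n - S_n\phi\|_{\infty} < \infty$. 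This will simultaneously deliver the ``in particular'' statement by choosing $\psi = 0$ when $\varphi \in Bow(\Sigma^{\N},\sigma)$.

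For the construction, I would use a sup-over-cylinders procedure tailored to the Bowen constants. By the Bowen property, there exist $M, \epsilon_B > 0$ with $|S_n\phi(x) - S_n\phi(y)| \le M$ whenever $d_n(x,y) < \epsilon_B$; fix an integer $L$ large enough that any two points agreeing on their first $L$ coordinates have distance less than $\epsilon_B$. Define
\[
f_n(x) := \sup\bigl\{S_n\phi(y) : y \in C_{i_1 \cdots i_{n+L-1}}(x)\bigr\}.
\]
Since $(n+L-1)$-cylinders are clopen and $S_n\phi$ is continuous on the compact space $\Sigma^{\N}$, each $f_n$ is well-defined, finite, and continuous, and is constant on every $(n+L-1)$-cylinder. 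Taking $y = x$ in the defining supremum and applying the Bowen estimate gives $0 \le f_n(x) - S_n\phi(x) \le M$ for every $x$ and $n$, hence $\sup_{n}\|f_n - S_n\phi\|_{\infty} \le M$.

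The remaining verifications then split into three clean pieces. Almost additivity of $(f_n)_{n \in \N}$ follows from Lemma \ref{aas} applied to the (trivially almost additive) additive sequence $(S_n\phi)_{n \in \N}$, since they lie within uniform distance $M$. For the Walters property, fix $\epsilon_W > 0$ small enough that $d(a,b) < \epsilon_W$ forces agreement on the first $L$ coordinates; if $d(\sigma^k x, \sigma^k y) < \epsilon_W$ for $0 \le k < n$, then $x$ and $y$ agree on their first $n+L-1$ coordinates, so they share a common $(n+L-1)$-cylinder and $f_n(x) = f_n(y)$. Combining $\sup_n \|f_n - S_n\phi\|_{\infty} \le M$ with $\|S_n\psi\|_{\infty}/n \to 0$ then yields the main limit, while taking $\phi = \varphi$ and $\psi = 0$ in the Bowen case preserves the uniform bound. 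The only delicate point is calibrating $L$ so that the Bowen bound applies on $(n+L-1)$-cylinders and the Walters control is uniform in $n$, but once $L$ is chosen this becomes purely combinatorial bookkeeping.
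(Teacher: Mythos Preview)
Your argument is correct, but it takes a genuinely different route from the paper's proof. The paper decomposes $\varphi = u + \xi$ exactly as you do, but then invokes thermodynamic formalism: by Proposition~\ref{GBS} the Bowen function $\xi$ admits a Gibbs measure $\nu$, and the paper sets $g_n(x) := \log\nu(C_{i_1\ldots i_n}(x))$ and $f_n := g_n + nP_\sigma(\xi)$. Almost additivity of $(f_n)$ then comes from the quasi-Bernoulli property of $\nu$ (Proposition~\ref{GQB}), the Walters property from $g_n$ being constant on $n$-cylinders, and the uniform bound $\sup_n\|f_n - S_n\xi\|_\infty < \infty$ directly from the Gibbs inequality. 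Your construction bypasses Gibbs measures entirely: you take $f_n(x) = \sup\{S_n\phi(y) : y \in C_{i_1\cdots i_{n+L-1}}(x)\}$, get the uniform bound straight from the Bowen estimate, and then deduce almost additivity via Lemma~\ref{aas} rather than via quasi-Bernoulli structure. Your approach is more elementary and self-contained, requiring no equilibrium-state machinery; the paper's approach, on the other hand, makes explicit the link between the constructed sequence and a quasi-Bernoulli measure, which is the thematic point the authors are driving at (cf.\ the sentence after the proof about sequences generated by quasi-Bernoulli measures). Both routes yield locally constant $f_n$ and hence the Walters property, and both handle the weak-coboundary remainder identically.
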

\begin{proof}
Since $\varphi \in Wcob(\Sigma^{\N},\sigma)+Bow(\Sigma^{\N},\sigma)$, there exist functions $u \in Wcob(\Sigma^{\N},\sigma)$ and $\xi \in Bow(\Sigma^{\N},\sigma)$ such that $\varphi = u + \xi$. Proposition \ref{GBS} guarantees the existence of a measure $\nu \in \cM_\sigma$ which is Gibbs with respect to $\xi$. In particular, there exists a constant $K \ge 1$ such that
\[
K^{-1} \le \frac{\nu(C_{i_1 \ldots i_n}(x))}{\exp(-nP_{\sigma}(\xi) + S_n\xi(x))} \le K
\]
for all $x \in \Sigma^{\N}$ and all $n \ge 1$. This implies that 
\begin{equation}\label{vcn}
|\log \nu(C_{i_1 \ldots i_n}(x)) + nP_{\sigma}(\xi) - S_n\xi(x)| \le \log K \quad \textrm{for all $x \in \Sigma^{\N}$ and all $n \ge 1$},
\end{equation}
Now define $\cG=(g_n)_{n \in \N}$ as the sequence given by $g_n(x):= \log \nu(C_{i_1 \ldots i_n}(x))$. Since every Gibbs measure is quasi-Bernoulli (see Proposition \ref{GQB}), the sequence of continuous functions $\cG$ generated by the quasi-Bernoulli measure $\nu$ is almost additive and satisfies the Walters property. Now consider the sequence $\cF =(f_n)_{n \in \N}$ given by $f_n:= g_n + nP(\xi)$. The sequence $\cF$ is clearly almost additive, satisfies the Walters property and it follows directly from \eqref{vcn} that 
\begin{equation}\label{bct}
\sup_{n \in \N}\|f_n - S_n\xi\|_{\infty} < \infty
\end{equation}
	
Moreover, since $\varphi - \xi \in Wcob(\Sigma^{\N},\sigma)$, we have that $(S_n\varphi)_{n \in \N}$ is physically equivalent to $(S_n\xi)_{n \in \N}$. This together with \eqref{bct} readily gives that 
\[
\lim_{n \to \infty}\frac{1}{n}\|f_n - S_n\varphi\|_{\infty} = 0,
\]
as desired. 
\end{proof}

In general terms, Proposition \ref{QB-G} is saying that every additive sequence generated by a Bowen continuous function with null topological pressure is physically equivalent to an almost additive sequence generated by some quasi-Bernoulli measure. 

In order to simplify the notation, let us denote $Hol:= Hol(\Sigma^{\N},\sigma)$, $Bow:= Bow(\Sigma^{\N},\sigma)$ and $Wcob(\Sigma^{\N},\sigma) = Wcob$. Moreover, let us define $Hof:= Hof(\Sigma^{\N},\sigma)$ as the subset of continuous functions which are not Bowen but have a unique equilibrium measure. We will refer to $Hof$ as the set of \emph{Hofbauer} functions (see \cite{Hof77}). 

Furthermore, consider the spaces $WB:= WB(\Sigma^{\N},\sigma) :=  Wcob(\Sigma^{\N},\sigma) + Bow(\Sigma^{\N}, \sigma)$ and $\widetilde{WB}:= C(\Sigma^{\N})/WB(\Sigma^{\N},\sigma)$, where $C(\Sigma^{\N})$ is the space of continuous functions on $\Sigma^{\N}$. We note that it is expected that $Hof \cap WB \neq \emptyset$ and $Hof \cap \widetilde{WB} \neq \emptyset$ (see for example \cite{PZ06}, \cite{Wal07}, \cite{Hu08}, \cite{IT12}, \cite{CT13}).   

Now let $f:\Sigma^{\N} \to \R$ be a continuous function. It is clear that if $(S_{n}f)_{n \in \N}$ is physically equivalent to some $\widetilde{f} \in Bow(\Sigma^{\N},\sigma)$ then we must have $f \in Wcob(\Sigma^{\N},\sigma)+ Bow(\Sigma^{\N},\sigma)$. Based on this, Proposition \ref{CCH}, Theorem \ref{BKM-H} and Proposition \ref{QB-G}, we can propose a classification of almost additive sequences with respect to the left-sided full shift (see Figure~\ref{AAS}). 

We separate the following types of almost additive sequences with respect to the left-sided full shift $\sigma: \Sigma^{\N} \to \Sigma^{\N}$:

\begin{itemize}
\item \textbf{Type 1.} Almost additive sequences $\cF_1$ with bounded variation and with the unique equilibrium measure satisfying the Gibbs property with respect to some function $\phi \in Bow$. They are always physically equivalent to $(S_n\psi)_{n \in \N}$ for some $\psi \in WB$ (i.e. $Bow$) and never physically equivalent to any additive sequence $(S_n\xi)_{n \in \N}$ with $\xi \in \widetilde{WB}$. 

\item \textbf{Type 2.} Almost additive sequences $\cF_2$ with bounded variation and with equilibrium measure not satisfying the Gibbs property with respect to any continuous function. In this case, the equilibrium is only weak-Gibbs with respect to some continuous function. These sequences are always physically equivalent to some $(S_n\psi)_{n \in \N}$ with $\psi \in Hof \cap\widetilde{WB}$. As a consequence, there is no $\xi \in WB$ such that $(S_n\xi)_{n \in \N}$ is physically equivalent to $\cF_2$. 

\item \textbf{Type 3.} Almost additive sequences $\cF_3$ without the bounded variation condition but admitting a unique equilibrium measure. They are  either physically equivalent to some $(S_n\psi_1)_{n \in \N}$ with $\psi_1 \in Hof \cap \widetilde{WB}$ or they are physically equivalent to some $(S_n\psi_2)_{n \in \N}$ with $\psi_2 \in Hof \cap WB$, which immediately implies $\psi_2 \in Bow$ (see Figure~\ref{AAS}).  

\item \textbf{Type 4.} Almost additive sequences $\cF_4$ with more than one equilibrium measure. They are always physically equivalent to some additive sequence $(S_n\psi)_{n \in \N}$ with $\psi \in \widetilde{WB}/Hof$ and, consequently, never physically equivalent to any additive sequence $(S_n\xi)_{n \in \N}$ with $\xi \in WB$. 
\end{itemize}

\begin{figure}[h]
\centering
\subfigure{\includegraphics[width=14.7cm]{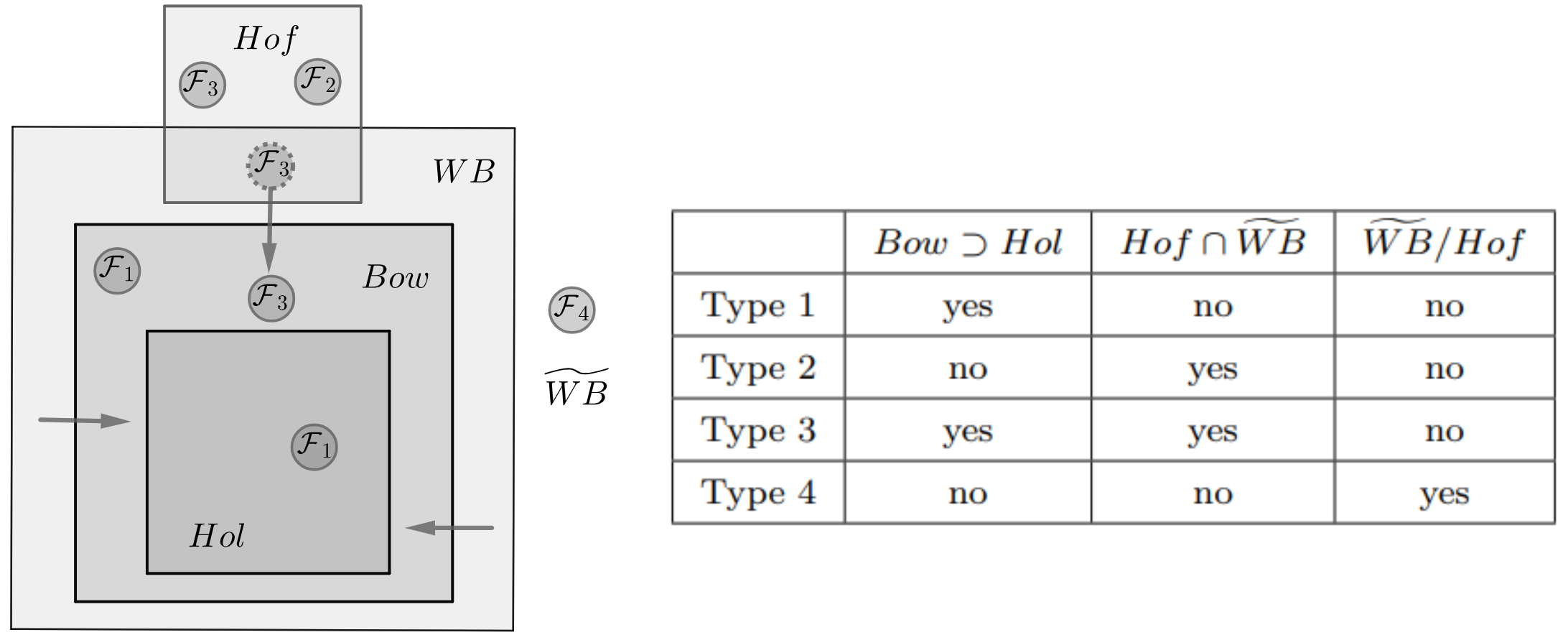}}
\caption{The different types of almost additive sequences and their physical equivalence relations with the space of continuous functions.}\label{AAS}
\end{figure}

Now let us show how one can construct examples for the types 1, 3 and 4. Our general simple strategy to search examples of asymptotically and almost additive sequences is to start with a fixed function $\phi$ in some "appropriate" already known subset of continuous functions and from that to build the nonadditive sequence physically equivalent to $(S_n\phi)_{n \in \N}$. \\

\textbf{Examples of Types 1, 3 and 4.}
Sequences of Type 1 are, for instance, the ones given by Proposition \ref{QB-G}, which also include the H\"older continuous case. Notice that those sequences are always physically equivalent to some almost additive sequence generated by a quasi-Bernoulli measure.

Sequences of Type 3 are constructed in the following way. Letting $\phi \in Bow$ and $\xi \in Wcob/Bow$, we have $\psi:= \phi + \xi \in Hof\cap WB$. Arguing as in the proof of Theorem~\ref{NHD}, for each $n \in \N$ there exists a continuous function $f_n \in Hol$ such that $\|f_n - S_n\psi\|_{\infty}\le 1$ (by the density of H\"older continuous functions). Since $\psi \notin Bow$, the almost additive sequence $\cF = (f_n)_{n \in \N}$ does not have bounded variation. Moreover, $\cF$ is almost additive by Lemma~\ref{aas}. On the other hand, $\psi \in WB$ implies that $(S_n\psi)_{n \in \N}$ is physically equivalent to some additive sequence $(S_nf)_{n \in \N}$ with $f \in Bow$. Then,
\[
\lim_{n \to \infty}\frac{1}{n}\|f_n - S_n f\|_{\infty} \le \lim_{n \to \infty}\frac{1}{n}\|f_n - S_n\psi\|_{\infty} + \lim_{n \to \infty}\frac{1}{n}\|S_n\psi - S_n f\|_{\infty} = 0.
\]
We note that, by Proposition \ref{GBS}, the unique equilibrium measure for $\cF$ does not satisfy the Gibbs property with respect to $\cF$ but is, nevertheless, Gibbs with respect to some Bowen function and, consequently, also quasi-Bernoulli. As in Type 1, these sequences are always physically equivalent to some almost additive sequence generated by a quasi-Bernoulli measure, which is a sequence solely composed of locally constant functions. 

For the other case of Type 3 sequences, we consider a function $\psi \in Hof \cap \widetilde{WB}$ and proceed as before to find an almost additive sequence $\cG = (g_n)_{n \in \N}$ of H\"older continuous functions such that $\|g_n - S_n\psi\|_{\infty} \le 1$ for all $n \in \N$. In particular,  $\cG$ does not have bounded variation, is physically equivalent to $(S_n\psi)_{n \in \N}$ and not physically equivalent to any additive sequence generated by a Bowen function. One more time, Proposition \ref{GBS} guarantees that the unique equilibrium measure for $\cG$ does not satisfy the Gibbs property with respect to $\cG$. In this case, the equilibrium measure still might be weak Gibbs with respect to some continuous function. 

In order to show an example of Type 4, we just start with some continuous function $\psi: \Sigma^{\N} \to \R$ with more than one equilibrium measure. It is clear that $\psi \in \widetilde{WB}/Hof$. Following in the very same manner as for Type 3, we are able to find almost additive sequences without bounded variation and which are physically equivalent to $(S_n\psi)_{n \in \N}$, as desired.  \qed 

Before discussing almost additive sequences of Type 2, let us settle \textbf{Question D}, which is regarding regularity of asymptotically additive sequences. 

We recall that a measure $\mu$ on $\Sigma^{\N}$ (not necessarily invariant) is said to be \emph{weak Gibbs} with respect to a function $\phi: \Sigma^{\N} \to \R$ when there exists a sequence $(K_n)_{n \in \N} \subset [1, \infty)$ with $\log K_n /n = 0$ and such that
\[
K_n^{-1} \le \frac{\mu_n(C_{j_1 \ldots j_n})}{\exp(-nP_{\sigma}(\phi) + S_n\phi(x))} \le K_n
\]
for all $x \in C_{j_1 \ldots j_n}$ and $n \ge 1$. The same definition is also naturally extended to the nonadditive case (see for example \cite{Bar06} and \cite{IY17}). \\

\textbf{Asymptotically additive case.}
Fix a continuous Hofbauer function $\psi \in Hof \cap~\widetilde{WB}$. Since every almost additive sequence admits a (not necessarily invariant) weak Gibbs measure (\cite{Bar06}), in particular, the function $\psi$ admits some weak Gibbs measure $\nu$ on $\Sigma^{\N}$, that is, there exists a sequence $(K_n)_{n \in \N} \subset [1, \infty)$ with $\log K_n /n = 0$ such that 
\[
|\log\nu(C_{i_1\ldots i_n}(x)) + nP(\psi) - S_n\psi(x)| \le \log K_n \quad \textrm{for all $x \in \Sigma^{\N}$ and $n \ge 1$},
\]
which readily implies that the sequence $\cG := (f_n + nP(\psi))_{n \in \N}$ is asymptotically additive and also physically equivalent to $(S_n\psi)_{n \in \N}$, where $f_n(x) := \log \nu(C_{i_1 \ldots i_n}(x))$ for all $x \in \Sigma^{\N}$ and $n \in \N$. Notice that $\cG$ has a unique equilibrium measure and satisfies the Walters property (i.e. also has bounded variation). Moreover, since $\psi \notin WB$, there is no additive sequence generated by a Bowen function which is physically equivalent to $\cG$. In particular, $\cG$ is not physically equivalent to any additive sequence generated by a locally constant function. 

Proceeding as before (see the proof of Theorem \ref{NHD}), for each $\alpha > 0$ there exists a sequence $\cH_{\alpha} := (h^{\alpha}_n)_{n \in \N}$ of H\"older continuous functions such that $\|h^{\alpha}_n - f_n - nP(\psi)\|_{\infty} \le \alpha$ for all $n \in \N$. This implies that $\cH_{\alpha}$ has bounded variation and 
\[
\lim_{n \to \infty}\frac{1}{n}\|h^{\alpha}_n - S_n\psi\|_{\infty} = 0 \quad \textrm{with $\psi \in Hof \cap \widetilde{WB}$}.
\] 
That is, for each $\alpha > 0$ the sequence $\cH_{\alpha}$ is asymptotically additive, has bounded variation, admits a unique equilibrium measure but is not physically equivalent to any additive sequence generated by a Bowen function. In particular, the sequences $\cH_{\alpha}$ are not physically equivalent to any additive sequence generated by a H\"older continuous function. 

These examples demonstrate that \textbf{Question D} cannot be affirmatively answered.
\qed \\

Finally, let us consider the second type of almost additive sequences of potentials. \\

\textbf{Examples of Type 2 (?)} We note that \emph{there exists an almost additive sequence of Type 2 if and only if there exists a quasi-Bernoulli measure not Gibbs with respect to any continuous function.} In fact, suppose $\cF$ is a sequence of Type 2, that is, $\cF$ has bounded variation and has the unique equilibrium measure $\mu$ not Gibbs with respect to any continuous function. Since $\cF$ is Gibbs with respect to $\cF$, Proposition \ref{GQB} guarantees that $\mu$ is a quasi-Bernoulli measure.  

Conversely, considering $\mu$ a quasi-Bernoulli measure, the sequence $\cF = (f_n)_{n \in \N}$ generated by $\mu$ is almost additive and has bounded variation. Since we are assuming $\mu$ not Gibbs with respect to any continuous function, by Proposition \ref{GBS} and Corollary \ref{EQ}, the sequence $\cF$ cannot be physically equivalent to any Bowen function. That is, 
\[
\lim_{n \to \infty}\frac{1}{n}\|f_n - S_n\psi\|_{\infty} = 0
\] 
for some $\psi \in \widetilde{WB}$. Since $\cF$ has bounded variation, it follows from Theorem 6 in \cite{Mum06} that $\cF$ has a unique equilibrium measure, which implies that $\psi \in Hof$.  Therefore, we conclude that $\psi \in Hof \cap \widetilde{WB}$ and $\cF$ is indeed a sequence of Type 2, as desired.   

We still don't know about the existence of such sequences in this setup or in any other setup in general.      \qed                   

\subsection{Some open questions and final comments}

As we saw in the previous subsection, the more general regularity problem relating sequences with bounded variation and Bowen functions is still open, and depends on the existence of sequences of Type 2. Then, inspired by \cite{BKM} and based on Proposition \ref{CCH} and Theorem \ref{BKM-H}, we ask the following:

\textbf{Q1:} \emph{is there a Bowen continuous function $A:\Sigma^{\N} \to GL(d,\R)$ generating a matrix cocycle $\cA: \Sigma^{\N} \times \N \to GL(d,\R)$ where the sequence $\cF = (\log \|\cA(x,n)\|)_{n \in \N}$ is almost additive of Type 2 ? For what dimensions $d \ge 2$ this should be possible ?}

A negative answer to \textbf{Q1} would give a concrete and important class of examples where we can reduce the nonadditive setup directly to the additive one, in the spirit of \cite{BKM}. On the other side, a positive answer would fill the only type of sequences that are missing in our proposed classification. \\

Now let us consider again  the left-sided full shift of finite type $\sigma: \Sigma^{\N} \to \Sigma^{\N}$. Denote by $Wal:= Wal(\Sigma^{\N},\sigma)$ the vector space of continuous Walters functions with respect to $\sigma$, and $Wal_{\mathcal{L}}:= Wal_{\mathcal{L}}(\Sigma^{\N},\sigma)$ the vector space of all functions $f \in Wal(\Sigma^{\N},\sigma)$ and such that $\mathcal{L}f = 0$, where $\mathcal{L}$ is the transfer operator given by 
\[
[\mathcal{L}f](y) = \frac{1}{\# \Sigma}\sum_{\sigma(x) = y}f(x) \quad \textrm{for all $y \in \Sigma^{\N}$.}
\]

T. Bousch (\cite{Bou02}) gave a characterization of Walters functions as 
\[
Wal = Cob\oplus Wal_{\mathcal{L}}\oplus \R.
\]
Based on this, we can ask if

\textbf{Q2:} \emph{Do we also have $Bow = WCob \oplus Wal_{\mathcal{L}}\oplus \R$ ? or even $Bow = Wcob + Wal$ ?}\\

Since $Wal$ is a proper subset of $Bow$ (see \cite{Wal07}), it is clear that $Wcob + Wal \subseteq  Wcob + Bow$ (not necessarily a proper inclusion). Then, a negative answer to $\textbf{Q2}$ would guarantee the existence of a function $\psi \in (Wcob + Bow)/(Wcob + Wal)$. In this case, Proposition \ref{QB-G} would immediately give an almost additive sequence $\cF$ satisfying the Walters property and which is physically equivalent to $(S_n\psi)_{n \in \N}$. Hence, $\cF$ would be an example of an almost additive sequence with Walters regularity but not physically equivalent to any additive sequence generated by a Walters function (or any locally constant function in particular). On the other hand, a positive answer to \textbf{Q2} would imply that for any Bowen function $\phi$ there exists a Walter function $\psi$ such that $(S_n\phi)_{n \in \N}$ and $(S_n\psi)_{n \in \N}$ are physically equivalent. Moreover, Theorem \ref{BOU} would give that $(S_n\phi)_{n \in \N}$ is cohomologous to $(S_n\psi)_{n \in \N}$ in the nonadditive sense of definitions \ref{bv}, \ref{CHM} and \ref{SCH}. In this case, regarding the study of additive and nonadditive thermodynamic properties with respect to subshifts of finite type, Bowen regularity and Walters regularity would be actually equivalent. Notice that, since the additive notion of cohomology is much stronger than the nonadditive version introduced in section \ref{coh}, this equivalence between Bowen and Walters functions wouldn't violate the classical Liv\v{s}ic Theorem. \\

\textbf{Acknowledgments.} The first author was partially supported by NSF of China, grant no. 12222110. The second author was partially supported by CNPq of Brazil, under the project with reference 409198/2021-8.

\end{document}